\pgfplotsset{compat=newest}
\newtheorem*{rep@theorem}{\rep@title}
\newcommand{\newreptheorem}[2]{%
\newenvironment{rep#1}[1]{%
 \def\rep@title{#2 \ref*{##1}}%
 \begin{rep@theorem}}%
 {\end{rep@theorem}}}
\newtheorem{theorem}{Theorem}[section]
\newtheorem{lemma}[theorem]{Lemma}
\newtheorem{proposition}[theorem]{Proposition}
\newtheorem{corollary}[theorem]{Corollary}
\newtheorem{conjecture}[theorem]{Conjecture}
\theoremstyle{remark}
\newtheorem{remark}{Remark}[section]
\newcommand{\R}{\mathbb{R}}
\newcommand{\Rn}{\mathbb{R}^n}
\newcommand{\Rnn}{\mathbb{R}^{2n}}
\newcommand{\Ha}{\mathcal{H}}
\newcommand{\Hn}{\mathbb{H}^n}
\newcommand{\He}{\mathbb{H}}
\newcommand{\C}{\mathbb{C}}
\newcommand{\Vt}{\mathbb{V}_{\theta}}
\newcommand{\V}{\mathbb{V}}
\newcommand{\Vp}{\mathbb{V}^{\perp}}
\newcommand{\U}{\mathbb{U}}
\newcommand{\Up}{\mathbb{U}^{\perp}}
\newcommand{\Vtp}{\mathbb{V}_{\theta}^{\perp}}
\newcommand{\Xt}{\mathbb{X}_{\theta}}
\newcommand{\X}{\mathbb{X}}
\newcommand{\Gt}{\mathbb{G}_{\theta}}
\newcommand{\G}{\mathbb{G}}
\newcommand{\Phit}{\Phi_{\theta}}
\newcommand{\Gh}{G_h(n,m)}
\newcommand{\Ghh}{G_h(n,1)}
\newcommand{\munm}{\mu_{n,m}}
\newcommand{\PV}{P_{\mathbb{V}}}
\newcommand{\PVp}{P_{\mathbb{V^{\perp}}}}
\newcommand{\VH}{\mathbb{V}\backslash\Hn}
\newcommand{\dvh}{d_{\V\backslash\Hn}}
\newcommand{\VoH}{\V_0\backslash\Hn}
\newcommand{\Vpo}{\V_0^{\perp}}
\DeclareMathOperator{\supp}{supp}
\DeclareMathOperator{\spn}{span}
\DeclareMathOperator{\Int}{Int}
\begin{document}

\title[Right coset projections on the Heisenberg group]
{Dimension distortion by right coset projections in the Heisenberg group}

\author[T.~L.~J.~Harris]{Terence L.~J.~Harris}
\address{Department of Mathematics, University of Illinois, Urbana, IL 61801, U.S.A.}
\email{terence2@illinois.edu}

\author[C.~N.~Y.~Huynh]{Chi N.~Y.~Huynh}
\address{Department of Mathematics, University of Illinois, Urbana, IL 61801, U.S.A.}
\email{nyhuynh2@illinois.edu}

\author[F.~Rom\'{a}n-Garc\'{i}a]{Fernando Rom\'{a}n-Garc\'{i}a}
\address{Department of Mathematics, University of Illinois, Urbana, IL 61801, U.S.A.}
\email{romanga2@illinois.edu}

\subjclass[2010]{28A78; 53C17}


\thanks{We thank Jeremy Tyson for suggesting the problem to us.}

\begin{abstract} We study the family of vertical projections whose fibers are right cosets of horizontal planes in the Heisenberg group, $\Hn$. We prove lower bounds for Hausdorff dimension distortion of sets under these mappings 
with respect to the natural quotient metric, which we show behaves like the Euclidean metric in this context. Our bounds are sharp in a large part of the dimension range, and we give conjectural sharp lower bounds for the remaining range. Our approach also lets us improve the known almost sure lower bound for the standard family of vertical projections in $\Hn$ for $n \geq 2$.
\end{abstract}

\maketitle

\section{Introduction}
 \begin{sloppypar}

The study of dimension distortion by projections dates back to J.~Marstrand's 1954 paper \cite{Marstrand}. Among many other things, it was shown that for an analytic 
set $A\subset \R^2$, $\dim P_{\theta}(A)=\min\{\dim A, 1\}$ for $\Ha^1$-almost all $\theta\in[0,\pi)$, where $P_{\theta}:\R^2\to \ell_{\theta}$ is the orthogonal projection onto the line with terminal angle $\theta$. Moreover, it was shown that if $\dim A>1$ then $\Ha^1(P_{\theta}(A))>0$ for $\Ha^1$-almost all $\theta\in[0,\pi)$. Over time, this result has been expanded and generalized in many directions. For instance in \cite{Kaufman}, R. Kaufman introduced a potential theoretic approach that streamlined Marstrand's proof, and  using this approach P.~Mattila generalized the result to higher dimensions \cite{MattilaProj}. The general result, including the Besicovitch-Federer characterization of unrectifiability (\cite{Bes1}, \cite{Federer}), is stated in the following theorem.

\begin{theorem}\label{EucProj}
Let $A\subset\Rn$ be an analytic set of dimension $s$. 
\begin{enumerate}
\item If $s\leq m$, $\dim P_V(A) = s$ for almost every $m$-dimensional subspace $V$.
\item If $s> m$, $\Ha^m( P_V(A)) > 0$  for almost every $m$-dimensional subspace $V$.
\item If $s> 2m$, $\Int( P_V(A)) \neq\varnothing$  for almost every $m$-dimensional subspace $V$.\end{enumerate} 
Moreover, in the case where $s=m$ and with the added hypothesis that $\Ha^m(A)<\infty$, $A$ is purely $m$-unrectifiable if and only if $\Ha^m( P_V(A)) = 0$ for almost every $m$-dimensional subspace $V$. 
\end{theorem}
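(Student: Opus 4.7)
The unified engine behind parts (1)--(3) is Kaufman's potential-theoretic method combined with the Fourier picture on the Grassmannian $G(n,m)$ equipped with its unique orthogonally invariant probability measure $\gamma_{n,m}$. Three standard ingredients will drive everything: Frostman's lemma, which for analytic $A$ of dimension $s$ supplies Borel probability measures $\mu$ on $A$ with finite $t$-energy $I_t(\mu) = \iint |x-y|^{-t}\,d\mu(x)\,d\mu(y)$ for every $t<s$; the identity $\widehat{P_{V\ast}\mu} = \hat\mu|_V$ under an isometric identification $V \cong \R^m$; and the Grassmannian integration formula
\[
\int_{G(n,m)} \int_V f(\xi)\,d\Ha^m(\xi)\,d\gamma_{n,m}(V) \;=\; c_{n,m} \int_{\R^n} f(\xi)\,|\xi|^{m-n}\,d\xi,
\]
which follows from the $O(n)$-invariance of $\gamma_{n,m}$.

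For (1), given $t<s\leq m$, the plan is to swap integrals in $\int I_t(P_{V\ast}\mu)\,d\gamma_{n,m}(V)$; the Grassmannian integral of $|P_V e|^{-t}$ against a fixed unit vector $e$ is finite precisely when $t<m$, so the average energy is at most a constant times $I_t(\mu)<\infty$. Hence $I_t(P_{V\ast}\mu)<\infty$ for $\gamma_{n,m}$-a.e.\ $V$, giving $\dim P_V(A)\geq t$; letting $t\to s$ together with the trivial Lipschitz upper bound yields equality. For (2), since $s>m$ one picks $\mu$ with $I_m(\mu)<\infty$, and Plancherel on $V$ together with the Grassmannian formula gives
\[
\int_{G(n,m)} \|P_{V\ast}\mu\|_{L^2(V)}^2\,d\gamma_{n,m}(V) \;\asymp\; I_m(\mu) \;<\; \infty,
\]
so $P_{V\ast}\mu$ has an $L^2$ density for a.e.\ $V$, forcing $\Ha^m(P_V(A))>0$. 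For (3), take $2m<t<s$, and verify $\hat\mu|_V\in L^1(V)$ for a.e.\ $V$ by splitting into $|\xi|\leq 1$ (trivially bounded since $\|\hat\mu\|_\infty\leq \mu(\R^n)$) and $|\xi|>1$, applying Cauchy--Schwarz against the weight $|\xi|^{(t-m)/2}$, and invoking the Grassmannian formula; the condition $t>2m$ is exactly what forces $\int_{|\xi|>1}|\xi|^{m-t}\,d\Ha^m|_V$ to converge. An $L^1$ Fourier transform gives $P_{V\ast}\mu$ a continuous density, which is positive on a nonempty open set contained in $P_V(A)$.

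The Besicovitch--Federer equivalence is the main obstacle, and its proof is qualitatively different from the Fourier arguments above. The easy direction, that a rectifiable piece of positive $\Ha^m$-measure in $A$ forces $\Ha^m(P_V(A))>0$ for $\gamma_{n,m}$-a.e.\ $V$, follows from the area formula applied to the countably many $C^1$ $m$-submanifolds covering the rectifiable part, since their restrictions to a generic $V$ are locally injective on a set of positive measure. The hard direction, that pure $m$-unrectifiability forces $\Ha^m(P_V(A))=0$ for a.e.\ $V$, requires the full structural theory of $m$-sets: arguing by contradiction, one assumes positive projection measure on a positive-$\gamma_{n,m}$ set of subspaces and then uses Besicovitch-type density theorems and tangent-measure analysis to extract a $C^1$ $m$-submanifold intersecting $A$ in positive $\Ha^m$-measure, contradicting unrectifiability. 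This step cannot be handled by energy estimates alone and is where the serious geometric measure theory enters.
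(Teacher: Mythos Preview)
The paper does not supply its own proof of this theorem; it is stated in the introduction as classical background, with attributions to Marstrand \cite{Marstrand}, Kaufman \cite{Kaufman}, Mattila \cite{MattilaProj}, Besicovitch \cite{Bes1}, and Federer \cite{Federer}. There is therefore nothing in the paper to compare your argument against.

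That said, your outline is the standard one from those references and is essentially correct: Kaufman's energy method for part~(1), the $L^2$ density via Plancherel and the Grassmannian integration formula for part~(2), the $L^1$ Fourier argument under $s>2m$ for part~(3), and the acknowledgment that the hard direction of Besicovitch--Federer requires the full structural theory of purely unrectifiable sets rather than energy estimates. One small caveat on part~(2): as written you take $\mu$ with $I_m(\mu)<\infty$, but Frostman only gives $I_t(\mu)<\infty$ for $t<s$; since $s>m$ you may indeed pick $t=m$, so this is fine, though it is worth being explicit that the strict inequality $s>m$ is what permits this choice.
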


Analogous, but in some cases weaker, results have been obtained when projections are restricted to a subfamily of planes \cite{Hproj2, IsoProj, FasslerOrponen, OberOber, Chen, OrpVen, THarris1}. In \cite{PS} the authors introduced the concept of transversal families of maps thus giving a vast generalization of Theorem~\ref{EucProj} which extended the result to many more families of mappings. The problem has also been studied outside of the Euclidean setting, specifically in the Heisenberg group, in \cite{HProj1, Hproj2}. There, the story is far from over. Two distinct families of ``projections" arise naturally in this context, known as homogeneous projections. Dimension distortion by one of these families, that of horizontal projections, can be tackled using transversality, but the other family, that of vertical projections, is not transversal in the sense of Peres and Schlag and is otherwise quite difficult to work with. Improving the known dimension distortion bounds in this context continues to be an active area of research with improvements being made recently in \cite{THarris2}. In this paper we continue the work in this direction by studying another natural, yet unstudied, family of projections in the Heisenberg group. Our approach also improves the known dimension distortion bound for the standard family of homogeneous projections studied in \cite{Hproj2}.

The $n$th Heisenberg group is defined as the manifold $\Hn:=\R^{2n}\times\R$ with typical point denoted by $(z,t)=(x_1,\ldots,x_n,y_1\ldots,y_n,t)$ where for $j=1,\dots,n$, $z_j=x_j+iy_j$.  As such, we will identify $\mathbb{C}^n$ with $\R^{2n}$ (and e.g.~write $iz$ for pointwise scalar multiplication of $z \in \mathbb{C}^n$ by $i$). We endow this manifold with the group law $(z,t)*(w,s)=(z+w,t+s+\frac{1}{2}\omega(z,w))$, where $w=(u_1,\ldots,u_n,v_1,\ldots,v_n)$, and $\omega(z,w)=\sum_{j=1}^n(x_jv_j-y_ju_j)$. This group law makes $\Hn$ a Lie group with left invariant vector fields \[X_j=\frac{\partial}{\partial x_j}-\frac{y_j}{2}\frac{\partial}{\partial t},\ Y_j=\frac{\partial}{\partial y_j}+\frac{x_j}{2}\frac{\partial}{\partial t},\ T=\frac{\partial}{\partial t}  \text{ for } j=1,\ldots,n. \]

 For any given $j$,  $[X_j,Y_j]=T$, so $\mathcal{H}=\spn\{X_j,Y_j:\ j=1,\ldots,n\}$ forms a bracket generating distribution. We say an absolutely continuous curve $\gamma:[0,1]\to\Hn$ is horizontal if \[\dot{\gamma}(s)\in\mathcal{H}_{\gamma(s)}\ \text{for a.e.~}s\in[0,1].\] By declaring $\{X_j,Y_j:\ j=1,\ldots, n\}$ to be orthonormal, we can compute the (horizontal) length of $\gamma$ in the usual way.  We will denote the length of $\gamma$ by $|\gamma|$. The bracket generating condition enables the definition of a Carnot-Carathéodory distance in all of $\Hn$ via
\[d_{cc}(p,q)=\inf\{|\gamma|:\ \gamma\text{ is horizontal, and } \gamma(0)=p,\ \gamma(1)=q\}.\] 
The Korányi gauge $\left\lVert (z,t)\right\rVert_{\mathbb{H}^n}^4=|z|^4+16t^2$ also gives a left invariant metric (known as the Korányi metric) given by $d_{\mathbb{H}^n}(p,q)=\left\lVert q^{-1}*p\right\rVert$. These two metrics are bi-Lipschitz equivalent.

For $r>0$ the non-isotropic dilations $\delta_r(z,t)=(rz,r^2t)$ give $\Hn$ a homogeneous structure. This enables the definition of homogeneous subgroups as subgroups which are closed under dilations. These subgroups come in two kinds, those contained in $\C^n\times\{0\}$ (horizontal), and those containing the entire $t$-axis (vertical). The $t$-axis is a homogeneous subgroup, one without a complementary horizontal subgroup. The horizontal subgroups $V \times \{0\}$ coincide with isotropic subspaces $V$ of $\C^n$, and their (Euclidean) orthogonal complements $V^{\perp}\times\R$ are vertical subgroups (here an isotropic subspace means one on which the symplectic form $\omega$ vanishes identically). We denote the Grassmannian of isotropic $m$-planes in $\R^{2n}$ as $\Gh$, and for $V\in\Gh$, we denote the corresponding horizontal and vertical subgroups by $\V$ and $\Vp$ respectively. For each $V\in\Gh$, $\Vp$ is a normal subgroup of $\Hn$, and we have a semi-direct splitting $\Hn=\V\ltimes\Vp$. Since the group $\Vp$ is normal, the splitting can also be taken to be $\Hn=\Vp\rtimes\V$. These splittings induce projection maps $\PV$ onto the horizontal subgroup $\V$, and $\PVp^R$, $\PVp^L$ onto the vertical subgroup $\Vp$. Here $\PVp^R$ is induced by the first mentioned splitting, and its fibers are right cosets of the subgroup $\V$. In the same way, $\PVp^L$ is induced by the second splitting and its fibers are left cosets of the horizontal subgroup $\V$.  Turns out, $\PV$ agrees with the Euclidean orthogonal projection onto the subspace $V$, while $\PVp^R$, and $\PVp^L$ can be defined via the group law by $\PVp^R(p)=\PV(p)^{-1}p,\ \PVp^L(p)=p\PV(P)^{-1}$. Since the group law is non-commutative, these two maps are inherently different, although they are related by the equation $\PVp^L(p)=-\PVp^R(-p)$. It is important to note that given a set $A\subset\Hn$ $\dim_{\Hn}A\neq\dim_{\Hn}(-A)$ in general. It is therefore expected that these maps behave differently when it comes to dimension distortion.

The group $U(n)$ of complex unitary matrices, which may be identified as a subgroup of $O(2n)$, preserves the symplectic form $\omega$ (see \cite[Chapter 3]{Mattila2}). This group acts smoothly and transitively on $\Gh$, and each $R\in U(n)$ induces an isometry of $\Hn$ given by $\mathcal{R}(z,t)=(Rz,t)$. Therefore, for any two horizontal subgroups $\V$ and $\V'$ there is an $R_0\in U(n)$ such that $\V=\mathcal{R}_0\V'$. Since $U(n)$ has a unique probability Haar measure, the space $\Gh$ inherits a unique $U(n)$-invariant probability measure, which we denote by $\munm$. This in turn allows us to put a measure on the set of horizontal (resp.~vertical) subgroups of Hausdorff dimension $m$ (resp.~$2n+2-m$) in $\Hn$, specifically, one simply uses the measure $\munm$ by appealing to the aforementioned correspondence between horizontal (resp.~vertical) subgroups and $\Gh$.

The vertical projections $\PVp^L$, together with horizontal projections, have been heavily studied 
in the context of Hausdorff dimension distortion (\cite{HProj1}, \cite{Hproj2}, \cite{FasslerHovila}, \cite{THarris2}). These projections also play a pivotal role in the theory of rectifiable sets in $\Hn$ (\cite{RectifiabilityinH}). Here we intend to initiate the study of the projection $\PVp^R$ in the context of dimension distortion. Whereas the fibers of the map $\PVp^L$ are horizontal lines, the fibers of $\PVp^R$ are not horizontal. It is therefore not very natural to consider $\PVp^R$ as a map from $(\Hn,d_{cc})$ to $(\Vp,d_{cc}\lfloor_{\Vp})$. In $\He^1$, the maps $\PVp^R$ have already been studied in other contexts (see for instance \cite{HtoG}) where a natural metric arises on the image of  $\PVp^R$. We study dimension distortion in the context of this, ``more natural", metric by first generalizing it to higher dimensions. Our main result is as follows.

\begin{theorem}\label{Hnbounda}  For $1 \leq m \leq n$ and any Borel set $A \subseteq \mathbb{H}^n$,  
\begin{equation} \label{euclidean2a}
 \dim_E P^L_{\mathbb{V}^{\perp}} (A), \ \dim_E P^R_{\mathbb{V}^{\perp}} (A) \geq \begin{cases}  \dim_EA & \text{ if }\dim_EA\in[0,2n-m]  \\
2n-m & \text{ if } \dim_EA\in[2n-m,2n]\\
\dim_EA-m & \text{ if } \dim_EA\in[2n,2n+1]
\end{cases}
\end{equation}
for $\mu_{n,m}$-a.e.~$V \in G_h(n,m)$, and
\begin{equation} \label{koranyi2a}
 \dim_{\mathbb{V} \backslash \mathbb{H}^n} P^R_{\mathbb{V}^{\perp}} (A)
\geq \begin{cases} (\dim_{\Hn}A)/2 &\text{ if }\dim_{\Hn}A\in[0,2]\\
\dim_{\Hn}A-1 &\text{ if }\dim_{\Hn}A\in[2,2n-m+1]\\
2n-m  &\text{ if }\dim_{\Hn}A\in[2n-m+1,2n+1]\\
\dim_{\Hn}A-m-1 &\text{ if }\dim_{\Hn}A\in[2n+1,2n+2]\\
  \end{cases} \end{equation}
for $\mu_{n,m}$-a.e.~$V \in G_h(n,m)$. If $\dim_E A \leq 2n-m$ then \eqref{euclidean2a} is sharp, and if $\dim_{\mathbb{H}^n} A \leq 2n+1-m$ then \eqref{koranyi2a} is sharp. 
\end{theorem}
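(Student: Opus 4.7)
\emph{Proof plan.} The plan is to reduce each lower bound to a potential-theoretic energy integral on the isotropic Grassmannian $G_h(n,m)$, using the explicit formula
\[
P^R_{\Vp}(z,t) = \bigl(\pi_{V^\perp}(z),\ t - \tfrac{1}{2}\omega(\pi_V(z),\pi_{V^\perp}(z))\bigr)
\]
obtained directly from the Heisenberg group law. A key observation is that this formula differs from the standard left-coset projection $P^L_{\Vp}$ only by the sign of the symplectic correction, so the Euclidean dimension distortion bounds for $P^R_{\Vp}$ and $P^L_{\Vp}$ coincide; consequently \eqref{euclidean2a} will automatically improve the known bounds for $P^L_{\Vp}$ of \cite{Hproj2}, as promised in the introduction.

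For the Euclidean lower bound \eqref{euclidean2a}, I would take a Frostman measure $\mu$ on $A$ of nearly maximal exponent and reduce the claim, by Fubini, to
\[
\int_{G_h(n,m)} \iint \bigl|P^R_{\Vp}(p) - P^R_{\Vp}(q)\bigr|^{-\tau}\, d\mu(p)\, d\mu(q)\, d\munm(V) < \infty
\]
for $\tau$ just below the claimed lower bound. The necessary single-point transversality estimate on $G_h(n,m)$ is then treated regime by regime. In the first regime $\dim_E A \in [0,2n-m]$, the pointwise bound $|P^R_{\Vp}(p)-P^R_{\Vp}(q)| \ge |\pi_{V^\perp}(z-w)|$ combined with a Mattila-type estimate for the isotropic Grassmannian suffices. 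In the saturation regime $[2n-m,2n]$, the spatial projection $\pi_{V^\perp}$ produces an image of positive $(2n-m)$-dimensional measure for $\munm$-a.e.\ $V$ by a Marstrand-type argument, forcing $\dim_E P^R_{\Vp}(A) \ge 2n-m$. The delicate third regime $[2n,2n+1]$ is the main obstacle: here, one must exploit how the symplectic twist in the $t$-coordinate provides an extra transverse direction as $V$ varies. I expect this combines the Grassmannian estimate with an analysis of how the quadratic form $\omega(\pi_V(\cdot),\pi_{V^\perp}(\cdot))$ depends on $V$, treating the twist as effectively contributing an additional linear projection on the $t$-direction, and yielding the slope-one improvement $\dim_E A - m$.

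For the quotient-metric bound \eqref{koranyi2a}, I would first derive a workable expression for $d_{\V\backslash\Hn}$: since $d_{\He}$ is left-invariant, $d_{\V\backslash\Hn}(\V p,\V q) = \inf_{v\in\V} d_{\He}(p, v q)$, and optimizing in $v$ yields a Kor\'{a}nyi-type gauge on the coset space in which the $t$-direction scales as $r^2$. The same energy-integral scheme, applied with this gauge and with the Heisenberg-dimension convention, yields \eqref{koranyi2a}; the additional regime $\dim_{\Hn}A \in [0,2]$ with bound $\dim/2$ comes from the double weighting of the vertical direction in the Heisenberg gauge, which degrades small-dimensional estimates by a factor of two. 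Finally, sharpness in the Euclidean regime $\dim_E A \le 2n-m$ is automatic because the lower bound $\dim_E A$ is matched by the trivial Lipschitz upper bound; sharpness of the quotient-metric bound on $\dim_{\Hn}A \le 2n+1-m$ requires explicit examples, which I would construct by placing Frostman-type sets in carefully chosen horizontal or vertical slices of a fixed $\Vpo$ so that the restriction of $P^R_{\Vp}$ realizes the stated lower bound for $\munm$-a.e.\ $V$.
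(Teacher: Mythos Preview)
Your plan diverges from the paper's proof in two structural ways, and the first of these is a genuine gap.

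\textbf{The third Euclidean regime.} For $\dim_E A \in [2n,2n+1]$ you propose to obtain $\dim_E A - m$ by an energy integral over $G_h(n,m)$, exploiting how the symplectic twist $\omega(\pi_V(\cdot),\pi_{V^\perp}(\cdot))$ varies with $V$. The paper does \emph{not} average over the Grassmannian for this bound; in fact it shows that $\dim_E P^R_{\Vp}(A) \geq \dim_E A - m$ holds for \emph{every} $V$, not just almost every $V$. The argument is a slicing trick: pick $U \in G_h(n,m)$ with $U^\perp \cap V = \{0\}$, use a Marstrand--Mattila slicing theorem adapted to right cosets (proved in the appendix) to find $u$ with $\dim_E[A \cap (\U^\perp * u)] \geq \dim_E A - m$, and then check that $P^R_{\Vp}$ restricted to the affine plane $\U^\perp * u$ is a smooth bijection with smooth inverse, hence locally bi-Lipschitz in the Euclidean metric. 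Your averaging idea is vague here (``I expect this combines\dots'') and would at best reprove a weaker, a.e.\ statement; you have not indicated how to handle the quadratic dependence on $V$ in the twist term, which is exactly what makes these projections non-transversal in the Peres--Schlag sense.

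\textbf{The quotient-metric bound.} You propose to run a second energy-integral argument directly with the gauge $d_{\V\backslash\Hn}$. The paper avoids this entirely: it observes (Lemma~\ref{locallipschitz}) that the identity map $(\Vp,d_{\V\backslash\Hn}) \to (\Vp,d_E)$ is locally Lipschitz, so $\dim_{\V\backslash\Hn} B \geq \dim_E B$ for any $B \subseteq \Vp$. Combining this with the already-proved Euclidean bound \eqref{euclidean2a} and the Dimension Comparison Principle $\dim_E A \geq \max\{\dim_{\Hn}A/2,\ \dim_{\Hn}A - 1\}$ gives \eqref{koranyi2a} for free. Your direct approach might work, but it is considerably harder and you have not specified how the infimum in $d_{\V\backslash\Hn}$ interacts with the Grassmannian average.

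\textbf{Smaller gap in the first regime.} Your bound $|P^R_{\Vp}(p)-P^R_{\Vp}(q)| \geq |\pi_{V^\perp}(z-w)|$ followed by a Mattila-type Grassmannian estimate only yields $\int |\pi_{V^\perp}(z-w)|^{-s}\,d\munm \lesssim |z-w|^{-s}$, not $d_E(p,q)^{-s}$. When $|t-\tau|$ dominates $|z-\zeta|$ this is insufficient. The paper handles this by splitting: if $|z-\zeta| \gtrsim |t-\tau|$ use the spatial part as you suggest; if $|z-\zeta| \ll |t-\tau|$, the $t$-coordinate of $P^R_{\Vp}(p)-P^R_{\Vp}(q)$ is itself $\gtrsim |t-\tau|$ uniformly in $V$ (the symplectic correction is $O(R|z-\zeta|)$, hence small), and no Grassmannian average is needed.

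Your remark that Euclidean sharpness on $[0,2n-m]$ is automatic from the smooth upper bound $\dim_E P^R_{\Vp}(A) \leq \dim_E A$ is correct and in fact cleaner than the paper's route, which deduces it from the Heisenberg sharpness examples. For the Heisenberg sharpness, however, the paper's examples are more specific than ``Frostman-type sets in slices'': one is a subset of a vertical segment through a point $e_1$ bounded away from the $t$-axis (so the quotient metric becomes locally Euclidean there, collapsing the square-root scaling), and the other is a product $\mathcal{C}_\alpha \times I$ placed near a point of $iV_0$ where again $d_{\V\backslash\Hn} \lesssim d_E$.
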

Here, $\dvh$ refers to this aforementioned ``more natural" metric on $\Vp$ while $\dim_E$ and $\dim_{\Hn}$ refer to the Hausdorff dimension with respect to the Euclidean and Heisenberg metrics, respectively (see \cite{SubGeo}). Our main idea is to obtain a projection theorem in the Heisenberg group by first considering the Euclidean metric on both sides and then applying some kind of ``dimension comparison principle''. This is natural for right coset projections because the resulting bound obtained is sometimes sharp. 
We remark that the Euclidean-Euclidean dimension distortion problem for vertical projections in $\mathbb{H}$ seems to have been first posed in \cite[p.~296]{MR3276006}. At least one instance of applying Euclidean methods and dimension comparison to projection bounds in the Heisenberg group can be found in the proof of Proposition 4.9 in \cite{Hproj2}.

By dimension comparison, Theorem~\ref{Hnbounda} leads to the following almost sure dimension bound for the standard (left-coset) projection problem.
\begin{theorem}\label{LeftCoset}
For $1 \leq m \leq n$ and any Borel set $A \subseteq \mathbb{H}^n$,  
\begin{equation} \label{leftcosetkoranyi}
\dim_{\Hn} P^L_{\mathbb{V}^{\perp}} (A) \\
\geq\begin{cases}
\dim_{\Hn}A-1 &\text{ if }\dim_{\Hn}A\in[2,2n-m+1]\\
2n-m  &\text{ if }\dim_{\Hn}A\in[2n-m+1,2n+1]\\
  \end{cases}  \end{equation} 
for $\mu_{n,m}$-a.e.~$V \in G_h(n,m)$.
\end{theorem}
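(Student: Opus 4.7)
The plan is to obtain Theorem~\ref{LeftCoset} as an essentially immediate consequence of the second half of Theorem~\ref{Hnbounda}, exploiting the relation $\PVp^L(p) = -\PVp^R(-p)$ noted in the introduction. The two auxiliary facts needed are (i) that the Euclidean inversion $\sigma : p \mapsto -p$ is a $d_{\Hn}$-isometry and (ii) that the quotient metric $\dvh$ is majorized by the restriction of $d_{\Hn}$ to $\Vp$.

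For (i), the Korányi gauge satisfies $\|{-}(z,t)\|_{\He}^4 = |z|^4 + 16t^2 = \|(z,t)\|_{\He}^4$, so $\sigma$ is a self-isometry of $(\Hn, d_{\He})$, hence of $(\Hn, d_{cc})$ by bi-Lipschitz equivalence. Consequently $\dim_{\Hn}(-S) = \dim_{\Hn}(S)$ for every Borel $S \subseteq \Hn$. For (ii), identify $\V \backslash \Hn$ with $\Vp$ via the unique right-coset representative and note
\[
\dvh(q_1, q_2) \;=\; \inf_{v_1, v_2 \in \V} d_{\Hn}(v_1 q_1, v_2 q_2) \;\leq\; d_{\Hn}(q_1, q_2)
\]
for $q_1, q_2 \in \Vp$; since a pointwise smaller metric yields no larger Hausdorff dimension, it follows that $\dim_{\Hn}(S) \geq \dim_{\V\backslash\Hn}(S)$ for every $S \subseteq \Vp$.

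Combining these, I write
\[
\dim_{\Hn} \PVp^L(A) \;=\; \dim_{\Hn}\bigl({-}\PVp^R(-A)\bigr) \;=\; \dim_{\Hn} \PVp^R(-A) \;\geq\; \dim_{\V\backslash\Hn} \PVp^R(-A),
\]
and apply inequality \eqref{koranyi2a} of Theorem~\ref{Hnbounda} to the Borel set $-A$: for $\munm$-a.e.\ $V \in \Gh$ this yields the piecewise lower bound as a function of $\dim_{\Hn}(-A)$. Since $\dim_{\Hn}(-A) = \dim_{\Hn}(A)$ by (i), substituting reproduces exactly \eqref{leftcosetkoranyi}.

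I do not expect any genuine obstacle—the argument is a formal consequence of Theorem~\ref{Hnbounda} together with the inversion identity. The one step that rewards care is the direction of the metric comparison in (ii): lower bounds transfer only from $\dim_{\V\backslash\Hn}$ to $\dim_{\Hn}$ and not the other way, so it is essential that the right-coset projection theorem on which we rely is formulated with the quotient metric on the image side. Everything else is bookkeeping, including the verification that the exceptional $\munm$-null set of planes for $-A$ is harmless because we apply the almost-sure statement to a single Borel set.
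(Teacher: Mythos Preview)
Your argument has a genuine gap at step~(i): the map $\sigma:p\mapsto -p$ is \emph{not} an isometry of $(\Hn,d_{\He})$ (nor of $(\Hn,d_{cc})$). The fact that the Kor\'anyi gauge satisfies $\|{-}p\|_{\He}=\|p\|_{\He}$ only says that $\sigma$ fixes the gauge at the origin; it does \emph{not} imply $d_{\He}(\sigma(p),\sigma(q))=d_{\He}(p,q)$. Since $-p=p^{-1}$ in $\Hn$, what you actually need is $\|q*p^{-1}\|_{\He}=\|q^{-1}*p\|_{\He}$, i.e.\ right-invariance of the metric, which fails. For a concrete counterexample in $\He^1$ take $p=(1,0,0)$ and $q=(0,1,1)$: one computes $\|q^{-1}*p\|_{\He}^4=8$ but $\|q*p^{-1}\|_{\He}^4=40$. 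This is exactly why the introduction warns that $\dim_{\Hn}A\neq\dim_{\Hn}(-A)$ in general. Consequently your chain
\[
\dim_{\Hn}\bigl(-\PVp^R(-A)\bigr)=\dim_{\Hn}\PVp^R(-A)
\]
breaks, and the substitution $\dim_{\Hn}(-A)=\dim_{\Hn}A$ in \eqref{koranyi2a} is unjustified.

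The paper sidesteps this obstruction by routing the argument through the \emph{Euclidean} metric, which \emph{is} reflection-invariant. One uses the dimension comparison $\dim_{\Hn}\PVp^L(A)\geq\dim_E\PVp^L(A)$, then the equalities $\dim_E\PVp^L(A)=\dim_E\bigl(-\PVp^R(-A)\bigr)=\dim_E\PVp^R(-A)$, then the Euclidean bound \eqref{euclidean2} applied to $-A$, then $\dim_E(-A)=\dim_E A$, and finally the dimension comparison $\dim_E A\geq\max\{\tfrac{1}{2}\dim_{\Hn}A,\ \dim_{\Hn}A-1\}$ to convert the answer into the piecewise function \eqref{leftcosetkoranyi}. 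Your step~(ii) and the reflection identity $\PVp^L(p)=-\PVp^R(-p)$ are correct and do appear in the repaired argument; the essential change is to invoke \eqref{euclidean2} rather than \eqref{koranyi2a}, so that every step involving $-A$ or $-\PVp^R(-A)$ is carried out with the Euclidean dimension.
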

 Previously, the best known almost sure lower bound for this problem (in $\Hn$ with $n>1$ and $\dim_{\mathbb{H}^n}A \leq m+2$) was
\[\dim_{\Hn}P_{\Vp}^L(A)\geq \min\{\dim_{\Hn}A,1\}\ \text{for $\munm$-almost all } V\in\Gh.\]
This bound also holds when $n=1$, though there it is not the best known. 
The best known universal lower bound was
\begin{multline*}
\dim_{\Hn}P_{\Vp}^L(A)\geq\\
\max\left\{0,\frac{\dim_{\Hn}A-m}{2},\dim_{\Hn}A-m-1,2(\dim_{\Hn}A-n-1)-m\right\}.
\end{multline*}
From this, the best possible almost sure lower bound was
\begin{multline*}
\dim_{\Hn}P_{\Vp}^L(A)\geq\\
\max\left\{\min\{\dim_{\Hn}A,1\},\frac{\dim_{\Hn}A-m}{2},\dim_{\Hn}A-m-1,2(\dim_{\Hn}A-n-1)-m\right\},
\end{multline*}
for $\munm$-almost every $V\in\Gh$.
Therefore, Theorem~\ref{LeftCoset} improves this almost sure lower bound in the range $\dim_{\Hn}A\in[2,2n+1]$. The new lower bound reads
\[\dim_{\Hn}P_{\Vp}^{L}(A)\geq\begin{cases}
\dim_{\Hn}A & \text{ if } \dim_{\Hn}A\in[0,1]\\
1 & \text{ if } \dim_{\Hn}A\in[1,2]\\
\dim_{\Hn}A-1 & \text{ if } \dim_{\Hn}A\in[2,2n-m+1]\\
2n-m & \text{ if } \dim_{\Hn}A\in[2n-m+1,2n+1]\\
2(\dim_{\Hn}A-n-1)-m & \text{ if } \dim_{\Hn}A\in[2n+1,2n+2],
\end{cases}\]
for $\munm$-almost every $V\in\Gh$. 

For $n > 1$, we do not know if the lower bounds in Theorem~\ref{Hnbounda} are sharp for $\dim_EA\geq 2n-m$ and $\dim_{\Hn}A\geq 2n+1-m$, but we suspect the answer is no. For $\dim_{\mathbb{H}^n} A > 2$ we predict the lower bound $\dim_{\Hn}A-1$ to hold up to $\dim_{\Hn}A=2n+2-m$; the example in the proof of Theorem~\ref{Hnbounda} shows this would be sharp. The conjectured lower bounds are given below; in all cases it is assumed that $1 \leq m \leq n$. 

\begin{conjecture}[{see \cite[Conjecture~1.5]{HProj1} for the case $n=1$}] \label{conjA} Let $A \subseteq \mathbb{H}^n$ be a Borel set. If $\dim_{\mathbb{H}^n} A \leq 2n+2-m$ then 
\[ \dim_{\mathbb{H}^n} P_{\mathbb{V}^{\perp}}^L (A) \geq  \dim_{\mathbb{H}^n} A \quad \text{ for a.e.~$V \in G_h(n,m)$,} \]
and if $\dim_{\mathbb{H}^n} A > 2n+2-m$ then 
\[ \mathcal{H}^{2n+2-m}_{d_{cc} }\left( P_{\mathbb{V}^{\perp}}^L (A) \right) >0  \quad \text{ for a.e.~$V \in G_h(n,m)$.} \]
\end{conjecture} 

\begin{conjecture} \label{conjD}  For any Borel set $A \subseteq \mathbb{H}^n$,
\[ \dim_{\mathbb{V} \backslash \mathbb{H}^n} P_{\mathbb{V}^{\perp}}^R (A) \geq \min\left\{ \dim_E A, 2n-m+1 \right\} \quad \text{ for a.e.~$V \in G_h(n,m)$.} \]
\end{conjecture} 

\begin{conjecture}  \label{conjE} For any Borel set $A \subseteq \mathbb{H}^n$,
\[ \dim_E P_{\mathbb{V}^{\perp}}^{L} (A) , \ \dim_E P_{\mathbb{V}^{\perp}}^{R} (A) \geq \min\left\{ \dim_E A, 2n-m+1 \right\} \quad \text{ for a.e.~$V \in G_h(n,m)$.} \]
\end{conjecture} 

\begin{conjecture} \label{conjB} For any Borel set $A \subseteq \mathbb{H}^n$,
\[ \dim_{\mathbb{V} \backslash \mathbb{H}^n} P_{\mathbb{V}^{\perp}}^R (A) \geq \min\left\{ \max\left\{ \frac{\dim_{\mathbb{H}^n} A}{2}, \dim_{\mathbb{H}^n} A -1 \right\}, 2n-m+1 \right\}, \]
for a.e.~$V \in G_h(n,m)$.
\end{conjecture} 

\begin{conjecture} \label{conjC}  For any Borel set $A \subseteq \mathbb{H}^n$,
\[  \dim_E P_{\mathbb{V}^{\perp}}^{L} (A) 
\geq \min\left\{ \max\left\{ \frac{\dim_{\mathbb{H}^n} A}{2}, \dim_{\mathbb{H}^n} A -1 \right\}, 2n-m+1 \right\}, \]
for a.e.~$V \in G_h(n,m)$.
\end{conjecture} 

\begin{conjecture} \label{conjF}  For any Borel set $A \subseteq \mathbb{H}^n$,
\[  
 \dim_E P_{\mathbb{V}^{\perp}}^{R} (A)  
\geq \min\left\{ \max\left\{ \frac{\dim_{\mathbb{H}^n} A}{2}, \dim_{\mathbb{H}^n} A -1 \right\}, 2n-m+1 \right\}, \]
for a.e.~$V \in G_h(n,m)$.
\end{conjecture} 

All these conjectures are sharp if true; the connections between them are pictured below.  The relations and sharpness will be shown at the end of Section~\ref{dimensiondistortion}.
\begin{equation} \label{conjecturerelations}
\begin{tikzcd}
& \text{Conj.~\ref{conjE}} \arrow[Leftrightarrow]{r}{} \arrow[Rightarrow]{d}{}    & \text{Conj.~\ref{conjD}} \arrow[Rightarrow]{d}{}  \\ 
\text{Conj.~\ref{conjA}} \arrow[Rightarrow]{r}{}  & \text{Conj.~\ref{conjC}} 
  & \text{Conj.~\ref{conjB}} \arrow[Leftrightarrow]{r}{} & \text{Conj.~\ref{conjF}}
\end{tikzcd} \end{equation}


Here we also include graphs summarizing our results on a.e.~Heisenberg and Euclidean dimension distortion.



\begin{center}
\begin{tikzpicture}[
declare function={
    func(\x)= (\x<=2) * (0.5*\x)   +
              and(\x>2, \x<=3) * (\x-1)     +
              and(\x>3, x<=5) * (2)				+
              and(\x>5, x<=6) * (\x-3);
		func2(\x)= (\x<=2) * (0.5*\x)   +
              and(\x>2, \x<=4) * (\x-1)     +
              and(\x>4, x<=6) * (3);			
  }]
  \begin{axis}[samples=300,
		grid=major,
		xtick={0,2,4,6},
    xticklabels={{\tiny$0$},{\tiny$2$},{\tiny $2n+2-m$},{\tiny$2n+2$}},
		extra x ticks={1,3,5},
    extra x tick labels={,{\tiny$2n+1-m$},{\tiny$2n+1$}},
    extra x tick style={
                xticklabel style={anchor=south}
            },     
		ytick={0,1,2,3},
    yticklabels={{\tiny$0$},{\tiny$1$},{\tiny$2n-m$},{\tiny$2n-m+1$}},
    xlabel=$\dim_{\mathbb{H}^n} A$,
    ylabel={$\dim_{\mathbb{V} \backslash \mathbb{H}^n} P^R_{\mathbb{V}^{\perp}} (A)$},
		ylabel style={yshift=-10ex,rotate=-90},
		legend style={
legend pos=outer north east,
}
  ] 
 \addplot[black,thick,domain=0:6]{func(x)}; 
 \addplot[dashed,thick, domain=3:6]{func2(x)};
\legend{Theorem~\ref{Hnbounda},Conjecture~\ref{conjB}}
  \end{axis}
\end{tikzpicture}
\end{center}

\begin{center}
\begin{tikzpicture}[
declare function={
    func(\x)= (\x<=2) * (\x)   +
              and(\x>2, \x<=4) * (2)     +
              and(\x>4, x<=5) * (\x-2);
		func2(\x)= (\x<=3) * (\x)   +
              and(\x>3, \x<=5) * (3);					
  }]
  \begin{axis}[samples=200,
		grid=major,
		xtick={0,1,2,4,5},
    xticklabels={{\tiny$0$},,{\tiny$2n-m$},{\tiny$2n$},{\tiny$2n+1$}},
		extra x ticks={3},
    extra x tick labels={{\tiny$2n-m+1$}},
    extra x tick style={
                xticklabel style={anchor=south}
            },     
		ytick={0,1,2,3},
    yticklabels={{\tiny$0$},,{\tiny$2n-m$},{\tiny$2n-m+1$}},
    xlabel=$\dim_E A$,
    ylabel={$\dim_E P^R_{\mathbb{V}^{\perp}} (A)$},
		ylabel style={yshift=-8ex,rotate=-90},
		legend style={
legend pos=outer north east,
}
  ] 
 \addplot[black,thick,domain=0:5]{func(x)}; 
 \addplot[dashed,thick, domain=2:5]{func2(x)};
\legend{Theorem~\ref{Hnbounda},Conjecture~\ref{conjE}}
  \end{axis}
\end{tikzpicture}
\end{center}

Finally, in the first Heisenberg group $\mathbb{H}$ there is a small improvement possible to Theorem~\ref{Hnbounda}, which we show in Section~\ref{first}. With Euclidean metrics on each side, Corollary~\ref{1dbound} is a better a.e.~lower bound than Theorem~\ref{Hnbounda} for $\dim_E A \in \left(1, 5/2\right)$.

\section{Right coset projections in \texorpdfstring{$\Hn$}{the first Heisenberg group}}\label{H1Grushin}
In this section we will first introduce the Grushin plane, which will come back later in connection with right coset quotient spaces. Then we will describe the right coset quotient space by vertical subgroups together with the corresponding vertical projections. Finally we will restrict to the case of vertical subgroups of co-dimension one where we have a clear description of the metric structure of the space and the aforementioned connection with the Grushin plane arises. It is worth mentioning that the connection between the Heisenberg group and the Grushin plane has been studied before (see for instance \cite{HtoG}, \cite{RotPreissStein} and \cite[(3) p.293]{Folland}). 

In this section we only consider the projections $\PVp^R$ which we will simply denote by $\PVp$. In addition, for $1\leq m\leq n$, the notation $\He^{n-m}$ will be frequently used. It is therefore important to emphasize that this notation signifies the $(n-m)$th Heisenberg group, $\C^{n-m}\times\R$, with all of its structure. In particular, when $n=m$, $\He^{n-m}$ is simply the ``$t$-axis", $\C^0\times\R=\R$, with standard addition and metric $d_{\He^{n-m}}=2 d_E^{1/2}$.

\subsection{The Grushin Plane}
The Grushin plane is the manifold $\mathbb{G}=\R^2$ with vector fields
\begin{equation} \label{vectorfields} \begin{cases}T =-v\frac{\partial}{\partial \tau}\\
V =\frac{\partial}{\partial v},
\end{cases} \end{equation}
where $(v,\tau) \in \mathbb{R}^2$. These vector fields span the whole tangent space at every point outside of the singular set $\{v=0\}$, and by taking them to be orthonormal there, we get a line form
\[ds^2=dv^2+\frac{d\tau^2}{v^2}\]
on $\R^2 \setminus \{(0,\tau ):\tau \in \R\}$. One can check that $[T,V]=\frac{\partial}{\partial \tau}$, which allows us to extend this metric to a Carnot-Carathéodory path distance in all of $\R^2$.  The resulting metric, denoted by $d_{\mathbb{G}}$, turns $\G$ into a non-equiregular sub-Riemannian manifold whose horizontal curves are curves that have horizontal tangent at every point of intersection with the critical line. That is to say, $\gamma:[0,1]\to\G$ is horizontal if there exist integrable functions $a$ and $b$ such that
 \[\dot{\gamma}(s)=a(s)T+b(s)V, \] 
for a.e.~$s \in [0,1]$. The length of $\gamma$ is then given by
\[\int_0^1 \left[a(s)^2+b(s)^2\right]^{1/2} \, ds.\]
If we write $\gamma(s)=(v(s),\tau(s))$, a more explicit formula for the length is 
\begin{equation}\label{Glength}
\Lambda_{\G}=\int_0^1\left[\dot{v}(s)^2+\frac{\dot{\tau}(s)^2}{v(s)^2}\right]^{1/2} \, ds.
\end{equation}

For each $t_0$ the vertical translation map $(v,t)\to(v,t+t_0)$ is an isometry of $\G$. This can also be seen as a non-transitive group action by $\R$ whose orbits are vertical lines, in particular, the orbit of $0$ is the critical line $v=0$. One interesting property of the Grushin metric, that will come back later in the discussion, is that the restriction of the distance to the critical line is comparable to the square root of the Euclidean distance. Therefore, this ``copy" of $\R$ is embedded into $\G$ in a ``snowflaked" way. In contrast, the restriction of the distance to any other vertical line is Riemannian. 
\end{sloppypar} 
\subsection{The right coset quotient space}\label{rightcoset}
For $1\leq m\leq n$, given $V\in\Gh$ we consider the quotient space of right cosets of $\V$ in $\Hn$,
\[\V\backslash\Hn:= \left\{ \mathbb{V}p :p\in\Hn \right\},\]
endowed with the quotient distance
\[\dvh(\V p,\V p')=\inf\left\{d_{cc}(qp,p'):\ q\in\V\right\}.\] 
There is a unique way to write elements of $\VH$ as $\V q$ with $q\in\Vp$. Therefore $\VH$ is identified with $\Vp$  by the map $\V q\mapsto q$. This map coincides with the map on $\VH$ induced by $\PVp$, that is $\PVp(\mathbb{V}p)=\{\PVp(p)\}$. 
\begin{lemma}
For each fixed $\V$, the map 
\[\PVp:(\Hn,d_{cc})\to \left(\Vp,\dvh\right)\]
is $1$-Lipschitz. 
\end{lemma}
\begin{proof}
Indeed, if $p,p'\in\Hn$ we have
\[\dvh(\PVp(p),\PVp(p'))=\inf_{q\in\V}d_{cc}(q\PVp(p),\PVp(p')).\] An upper bound is found by choosing a specific $q\in\V$. In particular, choosing $q=P_{\V}(p')^{-1}P_{\V}(p)$, and appealing to the left invariance of $d_{cc}$ we see that,
\[\dvh(\PVp(p),\PVp(p'))\leq d_{cc}(p,p'). \qedhere \]
\end{proof}
Denoting by $\pi_W$ the Euclidean orthogonal projection onto $W$, an explicit formula for the projection is given by
\begin{equation}\label{projetionformula}
\PVp(z,t)=\left(\pi_{V^{\perp}}(z),t-\frac{1}{2}\omega(\pi_V(z),\pi_{V^{\perp}}(z))\right)
\end{equation}

The space $\VH$ inherits a rich structure from $\Hn$ which allow us to have a more intuitive understanding of the space.

The unitary group, $U(n)$, acts smoothly and transitively on $\Gh$ and isometrically on $\Hn$ via $(z,t)\to(Rz,t),\  (R\in U(n))$, therefore understanding the metric properties of $\VoH$ for a fixed $\V_0$ will get us the same properties for $\VH$ in general. Hence, to simplify computations, fix the horizontal subgroup 
\[ \mathbb{V} = \V_0:=\{(x_1,\ldots,x_m,0,\ldots, 0): x_j\in\R\},\]
for the rest of this section. This gives us
\[\Vp=\Vpo=\{(0,\ldots,0,x_{m+1},\ldots, x_n, y_1,\ldots, y_n,t):\ x_j, y_j, t\in\R \}.\]
With this concrete setting, we discuss some of the symmetries of the space $\VH$.

\subsection*{Homogeneous dilations}

The space $\VH$ admits homogeneous dilations. Although these dilations are defined on $\VH$ we abuse notation using the same symbol as for the Heisenberg dilations since the dilations on $\VH$ are nothing more than the dilations on $\Hn$ that factor through the quotient map. For each $r>0$ the map $\delta_r:\VH\to\VH$ given by 
\[\delta_r(0,\ldots,0,x_{m+1}, \ldots, y_n,t)= \left(0,\ldots,0, r x_{m+1}, \ldots, ry_n,r^2t\right),\]
is homogeneous of degree 1 with respect to $\dvh$. 
Indeed:
\[\dvh(\delta_r(p),\delta_r(p'))=\inf_{q\in\V}d_{cc}(q\delta_r(p),\delta_r(p'))=r\inf_{q\in\V}d_{cc}(\delta_{1/r}(q)p,p')=r\dvh(p,p').\]
The last equality follows from the fact that $\V$ is homogeneous (so that $\delta_{1/r}(q)\in\V$).

\subsection*{Group action by \texorpdfstring{$\He^{n-m}$}{lower dimensional Heisenberg group}}

We embed $\He^{n-m}$ in $\Hn$ by the map $\xi\mapsto \widehat{\xi}$ given by,
\[(u_1,\ldots, u_{n-m},v_1,\ldots, v_{n-m},\tau)\mapsto (0,\ldots,0,u_1,\ldots ,u_{n-m},0,\ldots,0, v_1,\ldots, v_{n-m},\tau),\]
where in the right hand side the first $m$ coordinates and coordinates $n+1$ through $n+m$ are all zero. With this notation we can see that $\He^{n-m}$ acts on $\Hn$ by ``left translation" via the map\[L_{\xi}p=\widehat{\xi}p.\]
To see that this action is isometric, note that for each $\xi\in\He^{n-m}$, $\widehat{\xi}$ commutes with elements of $\V$.  Indeed, writing $q=(z,0)\in\V$ and $\widehat{\xi}=(\widehat{w},\tau)$, it is not hard to see that $\omega\left(\widehat{w},z\right)=0$. Because of this,
\begin{align*}
\dvh(L_{\xi}p,L_{\xi}p')&=\inf_{q\in\V}d_{cc}\left(q\widehat{\xi}p,\widehat{\xi}p'\right)\\
&=\inf_{q\in\V}d_{cc}\left(\widehat{\xi}qp,\widehat{\xi}p'\right)\\
&=\inf_{q\in\V}d_{cc}(qp,p')=\dvh(p,p').
\end{align*}
This action is smooth with respect to the quotient topology but it is not transitive. For a point $(0,\ldots,0,x_{m+1},\ldots,x_n,y_1,\ldots,y_n,t)\in\Vp$ its orbit consists exactly of all other points of the form $(0,\ldots,0,x_{m+1}',\ldots,x_n',y_1,\dotsc ,y_m, y_{m+1}',y_n',t')$. Therefore, the orbit space is parametrized by $\R^{m}$.

\subsection*{Group action by \texorpdfstring{$U(n-m)$}{lower dimensional unitary group}}

Similarly, we embed $U(n-m)$ into $U(n)$  via the map $R\mapsto \widetilde{R}$ given for each $z=(x_1,\ldots, x_n,y_1,\ldots, y_n)\in\Rnn$ by
\[\widetilde{R}z=\widetilde{z}.\]
Here \[\widetilde{z}=(x_1\ldots, x_m, \widetilde{x}_{m+1},\ldots,\widetilde{x}_n, y_1, \dotsc, y_{m},\widetilde{y}_{m+1},\ldots,\widetilde{y}_{n})\] with \[(\widetilde{x}_{m+1},\ldots,\widetilde{x}_n,\widetilde{y}_{m+1},\ldots, \widetilde{y}_n)=R(x_{m+1},\ldots, x_{n},y_{m+1},\ldots, y_{n}).\]
In this way $U(n-m)$ acts on $\VH$ via $p\mapsto\widehat{\mathcal{R}}p := \left(\widetilde{R}z,t\right)$ where $p=(z,t)\in\Vp\simeq\VH$. Once again, it is not hard to check that this action, as an action naturally extended to all of $\Hn$, fixes $\V$ pointwise. Therefore $\widehat{\mathcal{R}}(qp)=q\widehat{\mathcal{R}}p$ for each $q\in\V$ and $p\in\Vp$. Since $U(n)$ acts isometrically on $\Hn$, it follows that 
\begin{align*}
\dvh\left(\widehat{\mathcal{R}}p,\widehat{\mathcal{R}}p'\right)&=\inf_{q\in\V}d_{cc}\left(q\widehat{\mathcal{R}}p,\widehat{\mathcal{R}}p'\right)\\
&=\inf_{q\in\V}d_{cc}\left(\widehat{\mathcal{R}}(qp),\widehat{\mathcal{R}}p'\right)\\
&=\inf_{q\in\V}d_{cc}(qp,p')=\dvh(p,p').
\end{align*}
Like the $\He^{n-m}$ action, the action by $U(n-m)$ is smooth but not transitive. The orbit of a point 
$(0,\ldots,0,x_{m+1},\ldots,x_n,y_1,\ldots,y_n,t)\in\Vp$ consists of all other points of the form  $(0,\ldots,0,x_{m+1}',\ldots,x_n',y_1,\ldots y_m, y_{m+1}',y_n',t)$. Therefore, the orbit space is parametrized by $\R^{m+1}$.

The group action by $\He^{n-m}$ reveals that there are ``$\R^m$ many" copies of the set $\He^{n-m}$ embedded in $\Vp$ in a natural way. More precisely, using the notation $p=(x_1,x_2,y_1,y_2,t)\in\R^{m}\times\R^{n-m}\times\R^m\times\R^{n-m}\times\R=\He^n$, for a fixed $\widetilde{y}\in\R^m$ we denote by $U_{\widetilde{y}}$ the orbit $U_{\widetilde{y}}=\{L_{\xi}(0,0, \widetilde{y},0,0)\in\Hn:\xi\in\He^{n-m}\}$. The map $\He^{n-m}\to U_{\widetilde{y}}$ given by $(x,y,t)\to(0,x,\widetilde{y},y,t)$ gives a natural embedding of the set $\He^{n-m}$ into $\Vp$.

\begin{proposition}\label{restriction}
The restrictions of $d_{\VH}$ and $d_{cc}$ to $U_{\widetilde{0}}$, are bi-Lipschitz equivalent.
\end{proposition}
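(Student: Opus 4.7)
The plan is to prove each inequality separately. The easy direction $d_{\VoH}(p,p')\le d_{cc}(p,p')$ for $p,p'\in U_{\tilde{0}}$ is immediate from the definition of the quotient distance: taking $q=0\in\V_0$ in the infimum yields $d_{\VoH}(p,p')=\inf_{q\in\V_0}d_{cc}(qp,p')\le d_{cc}(p,p')$.

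For the reverse inequality, the key structural observation is that $\V_0$ and $U_{\tilde{0}}$ commute elementwise inside $\Hn$. Writing $q=(c,0,0,0,0)\in\V_0$ with $c\in\R^m$ and $p=(0,x_2,0,y_2,t)\in U_{\tilde{0}}$, the symplectic cross-term $\omega(q_z,p_z)$ vanishes, since $q$ is nonzero only in the first $m$ $x$-slots while $p$ has its first $m$ $x$- and $y$-coordinates equal to $0$. Hence $q*p=p*q$ by the Heisenberg group law. Setting $r:=p^{-1}p'\in U_{\tilde{0}}$ (which lies in $U_{\tilde{0}}$ since it is a subgroup of $\Hn$), combining this commutativity with the left-invariance of $d_{cc}$ gives
\[ d_{cc}(qp,p')=d_{cc}(pq,p')=d_{cc}(q,p^{-1}p')=d_{cc}(0,q^{-1}r), \]
and the same vanishing-$\omega$ computation identifies $q^{-1}r$ with the point of $\Hn$ whose $\R^{2n}$-component is $z_r$ shifted by $-c$ in the first $m$ $x$-slots and whose $t$-coordinate remains $t_r$. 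The reverse inequality therefore reduces to proving
\[ d_{cc}(0,q^{-1}r)\ge d_{cc}(0,r)\qquad\text{for every }q\in\V_0. \]

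The principal difficulty is this last step. My proposed route is to construct a $d_{cc}$-non-expansive retraction $\pi\colon\Hn\to U_{\tilde{0}}$ fixing $0$ and satisfying $\pi(q^{-1}r)=r$ for all $q\in\V_0$ and $r\in U_{\tilde{0}}$; any such $\pi$ immediately yields $d_{cc}(0,q^{-1}r)\ge d_{cc}(\pi(0),\pi(q^{-1}r))=d_{cc}(0,r)$. A natural candidate is the composition of the left-coset vertical projection $P^L_{\Vpo}$ with a further retraction of $\Vpo$ onto $U_{\tilde{0}}$ that annihilates the first $m$ $y$-coordinates; verifying that this composition is in fact $d_{cc}$-non-expansive, likely via a horizontal-lifting argument analogous to the $\He^1$-to-Grushin correspondence used in the preceding section, will be the heart of the proof.
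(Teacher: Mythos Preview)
Your reduction is exactly the paper's: both arrive at the inequality $d_{cc}(0,q^{-1}r)\ge d_{cc}(0,r)$ for $q\in\V_0$, $r\in U_{\tilde 0}$, i.e.\ $d_{cc}\big((x_1,x_2,0,y_2,t),0\big)\ge d_{cc}\big((0,x_2,0,y_2,t),0\big)$, using that $\V_0$ and $U_{\tilde 0}$ commute.

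The gap is in your proposed mechanism for this inequality. The map $\pi=(\text{kill }y_1)\circ P^L_{\Vpo}$ is \emph{not} $d_{cc}$-non-expansive on $\Hn$. Take $m=1$, $n\ge 2$, and $p=(R,0,R,0,0)$ (so $x_1=y_1=R$, $x_2=y_2=t=0$). Then $d_{cc}(0,p)=|z|=R\sqrt{2}$ since $p$ lies in the horizontal hyperplane. But $P^L_{\Vpo}(p)=(0,0,R,0,R^2/2)$ and hence $\pi(p)=(0,0,0,0,R^2/2)$, so $d_{cc}(0,\pi(p))=2\sqrt{\pi\cdot R^2/2}=R\sqrt{2\pi}>R\sqrt{2}$. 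Thus no horizontal-lifting argument will show $\pi$ is $1$-Lipschitz---it simply is not. The left-coset vertical projection is only locally $\tfrac12$-H\"older, and composing it with an ad~hoc coordinate annihilation does not repair this.

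What rescues your outline is that you never actually need a global Lipschitz retraction: you only need $d_{cc}(\pi(p),0)\le d_{cc}(p,0)$ for $p\in\V_0\cdot U_{\tilde 0}=\{y_1=0\}$. On that set your $\pi$ reduces to $(x_1,x_2,0,y_2,t)\mapsto(0,x_2,0,y_2,t)$, and the desired inequality is precisely the paper's ``Claim''. The paper proves it not via a retraction but by a direct curve-shortening argument: given a horizontal path $\gamma$ from $0$ to $(x_1,x_2,0,y_2,t)$, the projected curve $\tilde\gamma(s)=(0,x_2(s),0,y_2(s),t(s))$ (with the same $t$-component) is horizontal and shorter, because the $\omega$-terms involving the first $m$ coordinates drop out of the horizontality condition. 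So drop the retraction packaging and argue with curves directly.
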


\begin{proof}
For any $x_1\in\R^m, x_2,y_2\in\R^{n-m}$, and $t\in\R$ one can check directly from the formula for the Koranyi norm that,
\begin{equation}\label{korbnd}d_{\Hn}((x_1,x_2,0,y_2,t),0)\geq d_{\Hn}((0,x_2,0,y_2,t),0).\end{equation}
Now, as mentioned earlier, it is easy to check that $\omega(\V,U_{\widetilde{0}})=0$ so that $\V$ and $U_{\widetilde{0}}$ commute, and moreover, for $q\in\V$ and $p\in U_{\widetilde{0}}$, $qp=q+p$.
In particular, if $p,p'\in U_{\widetilde{0}}$ it follows that 
\begin{align*}
d_{\VH}(p',p)&=\inf_{q\in\V}d_{cc}(qp',p)\\
&=\inf_{q\in\V}d_{cc}(p^{-1}qp',0)\\
&=\inf_{q\in\V}d_{cc}(q+p^{-1}p',0)\\
&\simeq \inf_{q\in\V}d_{\Hn}(q+p^{-1}p',0)\\
&=d_{\Hn}(p^{-1}p',0)\simeq d_{cc}(p',p),
\end{align*}
where the first equality in the last line follows from \eqref{korbnd}. This completes the proof of the proposition.
\end{proof}

\begin{corollary}\label{embedding}
The map $\iota:(\He^{n-m},d_{cc,\He^{n-m}})\to(\Vp, d_{\VH})$ given by $\iota(x,y,t)=(0,x,0,y,t)$ is a bi-Lipchitz embedding.
\end{corollary}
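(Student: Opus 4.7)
The plan is to invoke Proposition~\ref{restriction} together with the fact that $\iota$ is a Carnot-group homomorphism that is CC-isometric onto its image in $\Hn$. First I would check that $\iota(\He^{n-m})$ coincides with the orbit $U_{\tilde 0}$: a direct computation of $L_\xi(0,\tilde 0,0,0)=\widehat\xi*(0,\tilde 0,0,0)$ using the Heisenberg product shows that both sets equal $\{(0,u,0,v,\tau)\in\Vpo : u,v\in\R^{n-m},\tau\in\R\}$. Consequently, Proposition~\ref{restriction} gives
\[d_{\VoH}(\iota(\xi),\iota(\xi'))=d_{cc}(\iota(\xi),\iota(\xi'))\]
for every $\xi,\xi'\in\He^{n-m}$, so the corollary reduces to showing that $\iota$ is a CC-isometry onto its image in $\Hn$.

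A direct computation using the Heisenberg product $(z,t)*(w,s)=(z+w,t+s+\tfrac{1}{2}\omega_n(z,w))$ shows that $\iota$ is a Lie group homomorphism: the identity $\omega_n((0,u,0,v),(0,u',0,v'))=u\cdot v'-v\cdot u'=\omega_{n-m}((u,v),(u',v'))$ gives $\iota(\xi*_{\He^{n-m}}\xi')=\iota(\xi)*_{\Hn}\iota(\xi')$. Moreover, $\iota$ intertwines the non-isotropic dilations on the two groups. By left-invariance of the CC-metrics, it therefore suffices to establish
\[d_{cc,\Hn}(\iota(\xi),0)=d_{cc,\He^{n-m}}(\xi,0)\]
for each $\xi=(u,v,\tau)\in\He^{n-m}$. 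The inequality $\leq$ is immediate: any horizontal curve $\tilde\gamma$ in $\He^{n-m}$ from $0$ to $\xi$ pushes forward via $\iota$ to a horizontal curve in $\Hn$ from $0$ to $\iota(\xi)$ of the same length, because $\iota_*$ maps the $j$-th horizontal frame field of $\He^{n-m}$ to $X_{m+j}$ and $Y_{m+j}$ in $\Hn$, which is orthonormal horizontal.

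The reverse inequality is where the main work lies. I would invoke the $U(n)$-invariance of the Heisenberg CC-metric together with the classical $\He^1$-reduction principle for minimizing geodesics (see e.g.~\cite{SubGeo}): by an element of $U(n)$ we may rotate $\iota(\xi)=(0,u,0,v,\tau)$ to $((r,0,\ldots,0),\tau)$ with $r=\sqrt{|u|^2+|v|^2}$, and a minimizing horizontal geodesic from $0$ to this point can be chosen to remain in the copy of $\He^1$ generated by the first complex coordinate. Hence $d_{cc,\Hn}(\iota(\xi),0)$ equals the $\He^1$-distance from $((r,0),\tau)$ to $0$. The identical reasoning in $\He^{n-m}$ (via $U(n-m)$-invariance) yields the same $\He^1$-value for $d_{cc,\He^{n-m}}(\xi,0)$, completing the chain of equalities. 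An alternative, more self-contained route would project a given horizontal $\Hn$-curve onto the $(x_2,y_2)$-coordinates and correct the resulting $t$-discrepancy by appending a planar isoperimetric loop, but this runs into a subadditivity issue between the horizontal length in $\R^{2n}$ and the separate planar lengths of the $(x_1,y_1)$ and $(x_2,y_2)$ projections, which makes the naive version fail; this is the principal obstacle if one wishes to bypass the geodesic reduction.
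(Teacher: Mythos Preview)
Your proposal is correct and follows the same strategy as the paper: reduce to Proposition~\ref{restriction} and then identify $d_{cc}\big((0,x,0,y,t),(0,u,0,v,s)\big)$ with $d_{cc,\He^{n-m}}\big((x,y,t),(u,v,s)\big)$. The paper simply asserts this last equality in one line, whereas you supply an actual argument for it (homomorphism plus the $U(n)$-rotation/$\He^1$-geodesic reduction), so your write-up is, if anything, more complete than the paper's.
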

\begin{proof}
It is clear that $\iota:\He^{n-m}\to U_{\widetilde{0}}\subset\Vp$ is bijective. By Proposition~\ref{restriction},
\begin{align*}
d_{\VH}(\iota(x,y,t),\iota(u,v,s)) &=d_{\VH}((0,x,0,y,t),(0,u,0,v,s))\\
&\simeq d_{cc}((0,x,0,y,t),(0,u,0,v,s))\\
&\simeq d_{cc,\He^{n-m}}((x,y,t),(u,v,s)). \qedhere \end{align*}
\end{proof}

 Proposition~\ref{restriction} and its corollary, do not hold for $\widetilde{y}\neq 0$. In particular, for $\widetilde{y}\neq 0$, the natural bijection of $\He^{n-m}$ onto the orbit $U_{\widetilde{y}}$ is not a bi-Lipschitz, embedding.
 Indeed, if $\widetilde{y}\neq 0$ and $p=(0,x,\widetilde{y},y,0), q=(0,u,\widetilde{y},v,0)\in U_{\widetilde{y}}$, we have
 \begin{equation}\label{KordistinU}
 d_{\Hn}(p,q)=\left[(|x-u|^2+|y-v|^2)^2+4(u\cdot y-x\cdot v)^2\right]^{1/4},\end{equation}
 whereas,
 \begin{multline*}
 d_{\VH}(p,q)\simeq \inf_{p'\in\V}d_{\Hn}(p'p,q)\\
 =\inf_{\widetilde{x}\in\R^m} \left\lVert \left(\widetilde{x},x-u,0,y-v,-\widetilde{x}\cdot\widetilde{y}-\frac{1}{2}(x\cdot v-y\cdot u) \right) \right\rVert_{\Hn}.
 \end{multline*}
 In particular, choosing $\widetilde{x}=-\frac{1}{2}(x\cdot v- y\cdot u)\frac{\widetilde{y}}{|\widetilde{y}|^2}$ gives the upper bound
 \[d_{\VH}(p,q)\lesssim \left[\frac{1}{4}(x\cdot v- y\cdot u)^2+|x-u|^2+|y-v|^2\right]^{1/2}. \]
 Comparing with \eqref{KordistinU} one sees that $d_{\VoH}\lfloor_{U_{\widetilde{y}}}$ cannot be bi-Lipschitz equivalent to $d_{\Hn}\lfloor_{U_{\widetilde{y}}}$, and therefore to $d_{cc}\lfloor_{U_{\widetilde{y}}}$.

 We expect the space $\VH$ to behave in an analogous way to the Grushin plane, $\G$, in that the metric should be Riemannian away from the critical subspace $U_{\widetilde{0}}$ and extend as a Carnot-Carath\'{e}odory metric to $U_{\widetilde{0}}$. We were unable to prove this in general, so it remains an interesting problem to check if $(\Vp,d_{\VH})$ is isometrically equivalent (or at least bi-Lipschitz equivalent) to a non equi-regular Carnot-Carath\'{e}odory space. In the specific case $m=1$ this is exactly true as we will see in the following section were we state this formally and give a sketch of the proof.

 \subsection{Vertical subgroups of co-dimension one}
 
 Consider the manifold $\R^{2n}$, with typical point denoted $(w, u_1,\ldots, u_{n-1}, w_1,\ldots, w_{n-1},\tau)$, and frame comprised of the vector fields
 \begin{equation}\label{Vfields}
 \Delta=\begin{cases}
 W=\frac{\partial}{\partial w}\\
 U_j=\frac{\partial}{\partial u_j}-\frac{w_j}{2}\frac{\partial}{\partial \tau}, & j=1,\ldots, n-1\\
 W_j=\frac{\partial}{\partial w_j}+\frac{u_j}{2}\frac{\partial}{\partial \tau}, & j=1,\ldots, n-1\\
 T=-w\frac{\partial}{\partial \tau}.
  \end{cases}
 \end{equation}
These vector fields span the entire tangent plane at every point outside of the critical $(2n-1)$-plane $\{w=0\}$, thus by declaring it orthonormal there, it induces a Riemannian distance on $\R^{2n}\setminus\{w=0\}$. Moreover, since $[U_j,W_j]=[T,W]=\frac{\partial}{\partial \tau}$, this metric can be extended to a Carnot-Caratheodory metric, $d_{\Delta}$, on all of $\R^{2n}$. Note that in the case $n=1$, the frame $\Delta$ consist only of the vector fields $V$ and $T$ and therefore the space $(\R^2, d_{\Delta})$ coincides with the Grushin plane.

\begin{proposition}
The space $(\Vp,\dvh)$ is isometric to $(\R^{2n},d_{\Delta})$.
\end{proposition}

\begin{proof}[Sketch of proof]
It is clear that, as sets, $\Vp$ and $\R^{2n}$ can be identified, so we consider the map $\PVp$ as a map from $\Hn$ to $\R^{2n}$. Firstly, we use the analytic change of variables in $\Hn$
\[\Psi(z,t)=[z,t]=(z,t+\frac{1}{2}\omega(\pi_V(z),\pi_{V^{\perp}}(z))).\]
Under this change of variables the horizontal vector fields become
\begin{equation}\label{modvfields}
\begin{cases}
\widetilde{X}_1=\frac{\partial}{\partial x_1}-y_1\frac{\partial}{\partial t}\\
\widetilde{Y}_1=\frac{\partial}{\partial y_1}\\
\widetilde{X}_j=\frac{\partial}{\partial x_j}-\frac{y_j}{2}\frac{\partial}{\partial t}, & j=2,\ldots, n\\
\widetilde{X}_j=\frac{\partial}{\partial y_j}+\frac{x_j}{2}\frac{\partial}{\partial t}, & j=2,\ldots, n,\\
\end{cases}
\end{equation}
and the projection map becomes 
\[\Phi_{\Vp}(z,t)=\PVp[z,t]=(\pi_{V^{\perp}}(z),t).\]
The differential of this map is easily computed to be the constant matrix
\[\Phi_{\Vp*}=\begin{pmatrix}
0&0\\
0&\textbf{I}
\end{pmatrix},\]
where $\textbf{I}$ is the $(2n)\times(2n)$ identity.  Hence, the push forward of the horizontal vector fields are 
\begin{equation}
\begin{cases}
X_1=\Phi_{\Vp*}\widetilde{X}_1=-y_1\frac{\partial}{\partial t}\\
Y_1=\Phi_{\Vp*}\widetilde{Y}_1=\frac{\partial}{\partial y_1}\\
X_j=\Phi_{\Vp*}\widetilde{X}_{j}=\frac{\partial}{\partial x_{j}}-\frac{y_{j}}{2}\frac{\partial}{\partial t}& j=2,\ldots, n\\
Y_j=\Phi_{\Vp*}\widetilde{Y}_{j}=\frac{\partial}{\partial y_{j}}+\frac{x_{j}}{2}\frac{\partial}{\partial t}& j=2,\ldots, n.\\
\end{cases}
\end{equation}
Note that these coincide exactly with \eqref{Vfields}, therefore if $\Gamma:[0,1]\to\Hn$ is a horizontal path in $\Hn$, then $\gamma=\PVp\circ\Gamma$ is a horizontal path in $(\R^{2n}, d_{\Delta})$. Indeed, $\Gamma$ is horizontal in $\Hn$ if there are integrable functions $a_j, b_j:[0,1]\to\R$ such that 
\[\dot{\Gamma}=\sum_{j=1}^na_j\widetilde{X}_j+b_j\widetilde{Y}_j,\]
therefore 
\[\dot{\gamma}=\Phi_{\Vp*}\dot{\Gamma}=a_1T+b_1W+\sum_{j=2}^{n}a_{j}X_j+b_{j}Y_j.\]
It follows that $\gamma$ is horizontal in $(\R^{2n}, d_{\Delta})$ and moreover,
\[\Lambda_{\Delta}(\gamma)=\int_0^1[\sum_{j=1}^n a_j^2+b_j^2]^{1/2}ds=\Lambda_{\Hn}(\Gamma).\]
This tells us that given $p,p'\in\Vp=\R^{2n}$, every $\Hn$-horizontal path between $\V p$ and $\V p'$ induces a $\Delta$-horizontal path between $p,p'\in\R^{2n}$ of the same length. Thus
\[d_{\Delta}(p,p')\leq \inf\{d_{cc}(qp,p'):q\in\V\}=\dvh(p,p').\]

Now we aim to show that every horizontal path in $(\R^{2n},\Delta)$ between $p,p'$ has a $\Hn$-horizontal lift between $\V p$ and $\V p'$ of the same length. This would imply $\dvh(p,p')\leq d_{\Delta}(p,p')$ and complete the proof.

To this end, let $\gamma=(w, u_1,\ldots,u_{n-1}, w_1,\ldots, w_{n-1}, \tau):[0,1]\to\R^{2n}$ be a horizontal path in $(\R^{2n},\Delta)$ with 
\[\dot{\gamma}=aW+\sum_{j=1}^{n-1}a_jW_j+b_jU_j+bT.\]
Put \[u(s)=u_0+\int_0^s b(\sigma)d\sigma,\]
where $u_0$ is arbitrarily chosen, so that $u:[0,1]\to\R$ is continuous and $\dot{u}(s)=b(s)$.
Then, set \[\Gamma(s)=(u(s),w(s),u_1(s),\ldots,v_{n-1}(s),\tau(s)).\]
It follows that $\Phi_{\Vp}(\Gamma)=\gamma$ and
\[\dot{\Gamma}=\dot{u}\frac{\partial}{\partial u}+\dot{\gamma}=b(\frac{\partial}{\partial u}-w\frac{\partial}{\partial \tau})+a\frac{\partial}{\partial w}+\sum_{j=1}^{n-1}a_jW_j+b_j U_j,\]
so $\Gamma$ is a horizontal path in $\Hn$ between the fibers $\Phi_{\Vp}^{-1}(0,\gamma(0))$ and $\Phi_{\Vp}^{-1}(0,\gamma(1))$. Furthermore,
\[\Lambda_{\Hn}(\Gamma)=\int_{0}^1\left[\sum_{j=1}^{n-1}a_j^2(s)+b_j^2(s)+a^2(s)+b^2(s)\right]^{1/2}ds=\Lambda_{\Delta}(\gamma),\]
and this completes the proof.
\end{proof}

Note that whenever $n>1$ the vector fields  $\{U_j, W_j: j=1,\ldots, n-1\}$ give rise to the embedded copy of $\He^{n-1}$ in $(\Vp,\dvh)$ that was mentioned in last section.  On the other hand, as mentioned above, when n=1 the frame $\Delta$ only consist of $V$ and $T$, and the Carnot-Caratheodory manifold $(\Vp,\dvh)$ is exactly the Grushin plane $\G$ with the embedded copy of ``$\He^0$" corresponding to the critical line. This last fact has been well known and used in conjunction with the right coset projections in the first Heisenberg group to solve certain iso-perimetric problems in the Grushin plane by projecting Heisenberg geodesics via $\PVp$ (\cite{HtoG}).



\section{Dimension distortion by right coset projections in \texorpdfstring{$\Hn$}{Heisenberg groups}} \label{dimensiondistortion}
We now have the appropriate setup to study dimension distortion by right coset projections. We have a family of 1-Lipschitz maps $\big\{\PVp:(\Hn,d_{cc})\to(\Vp,\dvh): V\in\Gh\big\}$ and would like to study the generic dimension of the sets $\PVp(A)$ for a given Borel set $A\subset\Hn$. First we note that since the maps are Lipschitz, the upper bound $\dim_{\VH}\PVp(A)\leq \dim_{\Hn}(A)$ holds trivially for all $\V$. Therefore, our main result focuses on almost sure dimension lower bounds. As we will see in the proof of the main result, lower bounds for the Euclidean Hausdorff dimension of projections will help us obtain lower bounds for their dimension with respect to the metric $\dvh$.

For any Borel subset $A$ of a complete separable metric space $(X,d)$, the Hausdorff dimension $\dim A$ of $A$ can be characterised using energy: $\dim A$ is the supremum over all $s \geq 0$ such that there exists a compactly supported probability measure $\mu$ on $A$ with 
\[ I_s(\mu, d) := \int \int d(x,y)^{-s} \, d\mu(x) \, d\mu(y) < \infty. \] 

The first four lemmas will show that for dimension lower bounds the a.e.~behaviour of projections with respect to the right coset metric is the same as with respect to the Euclidean metric.   

\begin{lemma} \label{locallipschitz} For fixed $V \in G_h(n,m)$, the identity map from $\left( \mathbb{V}^{\perp}, d_{\mathbb{V} \backslash \mathbb{H}^n } \right)$ to $\left( \mathbb{V}^{\perp}, d_E \right)$ is locally Lipschitz. 
\end{lemma}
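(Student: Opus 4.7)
The plan is to prove the following quantitative version: for any $R>0$, there exists $L=L(R)$ such that whenever $p=(z,t),\, p'=(z',t')\in\Vp$ satisfy $|z|,|z'|\leq R$, one has $d_E(p,p')\leq L\cdot\dvh(p,p')$. Set $r:=\dvh(p,p')$, and using the definition of the quotient distance, select a near-minimizer $q=(w,0)\in\V$ with $d_{cc}(qp,p')\leq 2r$. By the bi-Lipschitz equivalence of $d_{cc}$ with the Korányi metric, this gives $\|p'^{-1}qp\|_{\He}\lesssim r$.

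First I would compute $p'^{-1}qp$ explicitly using the group law:
\[p'^{-1}qp \;=\; \Bigl(w + z - z',\; t - t' + \tfrac{1}{2}\omega(w,\,z+z') - \tfrac{1}{2}\omega(z',\,z)\Bigr).\]
The Korányi bound then immediately yields the horizontal estimate $|w+z-z'|\lesssim r$ and the vertical estimate $|t-t'+\tfrac{1}{2}\omega(w,z+z')-\tfrac{1}{2}\omega(z',z)|\lesssim r^2$. Since $V$ and $V^\perp$ are \emph{Euclidean} orthogonal and $z,z'\in V^\perp$, one has $|w|^2+|z-z'|^2=|w+(z-z')|^2\lesssim r^2$, so $|w|\lesssim r$ and $|z-z'|\lesssim r$.

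The main difficulty is controlling $|t-t'|$: the term $\tfrac{1}{2}|\omega(w,z+z')|\leq |w|(|z|+|z'|)\lesssim Rr$ is immediate from the bound on $|w|$, but the term $\omega(z',z)$ is not obviously small given only $|z|,|z'|\leq R$. The crucial trick is antisymmetry of $\omega$: since $\omega(z',z')=0$, bilinearity gives $\omega(z',z)=\omega(z',z-z')$, and hence $|\omega(z',z)|\leq |z'|\,|z-z'|\lesssim Rr$. Combining with the $O(r^2)$ vertical bound yields $|t-t'|\lesssim r^2+Rr$. Since $r\leq d_{cc}(p,p')$ is bounded on the Euclidean-bounded region $\{|z|\leq R\}$ (as the Korányi norm is bounded on Euclidean compacts), the $r^2$ term is absorbed and $|t-t'|\lesssim_R r$.

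Assembling, $d_E(p,p')^2=|z-z'|^2+(t-t')^2\lesssim_R r^2$, so $d_E(p,p')\lesssim_R\dvh(p,p')$, which is the required local Lipschitz bound. The main obstacle was the $\omega(z',z)$ term, resolved by the antisymmetry identity $\omega(z',z)=\omega(z',z-z')$; every other estimate reduces to routine consequences of the comparability of $d_{cc}$ and $d_{\He}$, the bilinearity of $\omega$, and the Euclidean orthogonality of $V$ and $V^\perp$.
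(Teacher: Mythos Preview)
Your approach is correct in spirit but contains a minor yet real error in the setup. The quantitative version you claim to prove---that $d_E(p,p')\leq L(R)\,\dvh(p,p')$ whenever only $|z|,|z'|\leq R$---is false. Take $p=(0,0)$ and $p'=(0,t')$ on the $t$-axis (which lies in $\Vp$); then $\dvh(p,p')\sim |t'|^{1/2}$ while $d_E(p,p')=|t'|$, so the ratio blows up. The set $\{|z|\leq R\}$ is an unbounded cylinder, not a Euclidean compact, so your justification that ``$r$ is bounded on the Euclidean-bounded region $\{|z|\leq R\}$'' is circular. The fix is immediate: since ``locally Lipschitz'' only requires the estimate on Euclidean balls, assume $p,p'\in B_E(0,R)\cap\Vp$ from the start, so that $|t|,|t'|\leq R$ as well; then $r\leq d_{cc}(p,p')\lesssim_R 1$ and your absorption of the $r^2$ term is legitimate. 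With this correction the proof is complete, and the key steps (orthogonality of $V$ and $V^\perp$ to separate $|w|$ and $|z-z'|$, and the antisymmetry identity $\omega(z',z)=\omega(z',z-z')$) are exactly right.

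Your route differs from the paper's. The paper does not select a near-minimizer; instead it fixes an arbitrary $w\in V$ and shows the pointwise inequality
\[
|z-\zeta|+|t-\tau|\lesssim_R |z+w-\zeta|+\Bigl|t-\tau+\tfrac12\omega(z,\zeta)-\tfrac12\omega(z+\zeta,w)\Bigr|^{1/2}
\]
uniformly in $w$, via a three-case analysis on the relative sizes of $|t-\tau|$, $|z-\zeta|$, and $|w|$. Both arguments rest on the same two facts (orthogonality and the antisymmetry bound $|\omega(z,\zeta)|\leq R|z-\zeta|$), but yours avoids case splitting by extracting the bound $|w|\lesssim r$ directly from the near-minimizer, which then makes the vertical estimate a single triangle-inequality computation. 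The paper's version has the advantage of being a clean inequality valid for every $w$, while yours is arguably more streamlined once the setup is stated correctly.
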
 
\begin{proof} Fix $R>0$ and $(z,t), (\zeta, \tau) \in \mathbb{V}^{\perp} \cap B_E(0,R)$. To prove
\[ d_E((z,t), (\zeta, \tau) ) \lesssim_R d_{\mathbb{V} \backslash \mathbb{H}^n }( (z,t), (\zeta, \tau) ), \]
it suffices to show that 
\begin{equation} \label{star5} \left\lvert z-\zeta \right\rvert+ \left\lvert t-\tau \right\rvert \lesssim_R \left\lvert z+w-\zeta \right\rvert + \left\lvert t-\tau+ \frac{1}{2} \omega(z,\zeta) - \frac{1}{2} \omega(z+\zeta, w) \right\rvert^{1/2}, \end{equation}
uniformly for all $w \in V$. If $|t-\tau| \leq 2R|z-\zeta|$ then \eqref{star5} follows from orthogonality, using only the first term in the right hand side.  Hence it may be assumed that 
\[ \left\lvert t-\tau \right\rvert \geq 2R \left\lvert z - \zeta \right\rvert. \]
If $\left\lvert w \right\rvert \geq \frac{ \left\lvert t- \tau \right\rvert}{4R}$ then \eqref{star5} again follows from orthogonality, so it may be assumed that 
\[ \lvert w \rvert \leq \frac{ \left\lvert t- \tau \right\rvert }{4R}. \]
Thus 
\begin{align*} \left\lvert t-\tau + \frac{1}{2} \omega(z, \zeta ) - \frac{1}{2} \omega(z+\zeta, w) \right\rvert &\geq \lvert t- \tau \rvert - \frac{R}{2} \lvert z-\zeta \rvert - R |w| \\
&\geq \frac{|t-\tau|}{2} \\
&\gtrsim_R |t-\tau|^2. \end{align*}
Taking square roots gives \eqref{star5}, and therefore proves the lemma. \end{proof}
The following lemma gives a sufficient condition under which the preceding inequality can be reversed.
\begin{lemma} \label{reverselipschitz} Fix $V \in G_h(n,m)$ and $(z,t), (\zeta, \tau) \in \mathbb{V}^{\perp}$.  If
\[ \lvert(z,t)\rvert, \lvert (\zeta,\tau) \rvert \leq C, \]
and there exists a unit vector $e \in V$ such that
\[ \left\lvert\omega(z+\zeta, e)\right\rvert \geq c >0, \] then 
\[  d_{\mathbb{V} \setminus \mathbb{H}^n }\left((z,t), (\zeta,\tau)\right) \lesssim_{c,C} d_E\left((z,t) ,(\zeta,\tau)\right). \]
\end{lemma}
\begin{proof} By definition, \begin{multline} \label{cosetdistance} d_{\mathbb{V} \setminus \mathbb{H}^n }((z,t) , (\zeta,\tau))  \\
\sim \inf_{w \in V} \left(|z-\zeta + w| + \left|t-\tau +\frac{1}{2} \omega(z, \zeta) +  \frac{1}{2} \omega(w, z+\zeta) \right|^{1/2}\right). \end{multline}
The point
\begin{equation} \label{pointchoice} w = \frac{-2\left(t-\tau+\frac{1}{2} \omega(z,\zeta) \right)e}{\omega(e,z+\zeta)}, \end{equation}
lies in $V$ and satisfies $|w| \lesssim_{c,C} d_E((z,t), (\zeta,\tau))$. Putting the $w$ from \eqref{pointchoice} into \eqref{cosetdistance} makes the second term vanish, and so
\[ d_{\mathbb{V} \setminus \mathbb{H}^n }((z,t) , (\zeta,\tau)) \lesssim_{c,C} d_E((z,t), (\zeta,\tau)). \qedhere \] \end{proof}

\begin{lemma} \label{equivalence} Fix $\beta \geq 0$, $n \geq 1$, $m\in \{1, \dots, n\}$ and $\alpha \in [2, 2n+2)$. The following two statements are equivalent.
\begin{enumerate}[(i)]
\item For any Borel set $A \subseteq \mathbb{H}^n$ with $\dim_{\mathbb{H}^n} A > \alpha$,
\[ \dim_{\mathbb{V} \setminus \mathbb{H}^n } P_{\mathbb{V}^{\perp}}^R(A)  \geq \beta \quad \text{ for a.e. } V \in G_h(n,m). \]
\item For any Borel set $A \subseteq \mathbb{H}^n$ with $\dim_{\mathbb{H}^n} A > \alpha$, 
\[ \dim_E P_{\mathbb{V}^{\perp}}^R(A)  \geq \beta \quad \text{ for a.e. } V \in G_h(n,m). \] \end{enumerate} \end{lemma}
\begin{proof} The implication $\text{(ii)} \Rightarrow \text{(i)}$ follows directly from Lemma~\ref{locallipschitz}, so assume that (i) holds. Let $A \subseteq \mathbb{H}^n$ be a compact set with $\dim_{\mathbb{H}^n} A > \alpha \geq 2$. 
Let $\mu$ be a Borel probability measure on $A$ with
\[ \mu(B_{\mathbb{H}^n}((z,t), r ) ) \lesssim r^s \quad \text{for all } (z,t) \in \mathbb{H}^n \text{ and } r>0, \]
where $2 \leq \alpha <  s < \dim_{\mathbb{H}^n} A$. Fix $s_0 >0$ with 
\begin{equation} \label{s0defn}  s_0 < \min\{ s-2, m\}. \end{equation}
By a similar covering argument to the proof of Theorem~1.1 in \cite{Hproj2},
\begin{equation} \label{energy47}  
\int_A \int_A \int_{G_h(n,m)} \frac{1}{|\pi_V(z-\zeta)|^{s_0}} \, d\mu_{n,m}(V) \, d\mu(z,t) \, d\mu(\zeta,\tau)< \infty, \end{equation}
where the inner integral is bounded using the inequality from the proof of Theorem~1.2 in \cite{Hproj2}. Let $\mathcal{U}$ be a nonempty open subset of $G_h(n,m)$ such that there exists a continuously varying orthonormal basis $\{v_1(V), \dotsc, v_m(V) \}$ for $V$ as $V$ varies over $\mathcal{U}$ (which exists e.g.~by Gram-Schmidt). By covering $G_h(n,m)$ with a finite number of such sets, it will suffice to show that 
\[ \dim_E P_{\mathbb{V}^{\perp}}(A)  \geq \beta \quad \text{ for a.e. } V \in \mathcal{U}. \]
Coordinate-wise multiplication by $i$ from $\mathbb{C}^n$ to $\mathbb{C}^n$ is a linear map in the complex unitary group $U(n)$, and since $\mu_{m,n}$ is $U(n)$-invariant (see \cite{Hproj2}), \eqref{energy47} yields
\begin{equation} \label{pause42} |\pi_{iV}(z-\zeta)| > 0, \end{equation}
for $\mu \times \mu \times \mu_{n,m}$ almost every $((z,t), (\zeta, \tau), V) \in A \times A \times \mathcal{U}$. Let $\epsilon >0$; the preceding statement gives a $\delta>0$ such that
\[ (\mu \times \mu \times \mu_{n,m}) \left\{ ((z,t), (\zeta, \tau), V) \in A \times A \times \mathcal{U} : \left\lvert \pi_{iV}(z-\zeta) \right\rvert \leq \delta \right\} < \epsilon. \] 
By Fubini, this in turn implies that 
\begin{equation} \label{squareroot} \mu_{m,n}( \mathcal{U}_0) \geq \mu_{n,m}(\mathcal{U})- \sqrt{\epsilon}, \end{equation}
where 
\begin{equation} \label{unought} \mathcal{U}_0 := \left\{ V \in \mathcal{U} : (\mu \times \mu) \left\{ ((z,t), (\zeta, \tau)) \in A \times A : \left\lvert \pi_{iV }(z-\zeta) \right\rvert \leq \delta \right\} \leq \sqrt{\epsilon} \right\}. \end{equation}
Let 
\[ \mathcal{U}_0 = \bigcup_{k=1}^N \mathcal{U}_0^{(k)}, \]
be a finite, disjoint partition of $\mathcal{U}_0$ into nonempty sets $\mathcal{U}_0^{(k)}$ such that 
\begin{equation} \label{deltasquare} \left\lvert v_j(V) - v_j(V')\right\rvert < \delta^2 \quad \text{for all } V, V' \in \mathcal{U}_0^{(k)} \text{ and for all } j, k. \end{equation}
The definition of $\mathcal{U}_0$ in \eqref{unought} implies that for each $V \in \mathcal{U}_0$,
\[ \mu \left\{ (z,t) \in A : \left\lvert \pi_{iV}(z) \right\rvert > \delta/2 \right\} \geq 1 - \epsilon^{1/4}. \]
Hence for each $k$ there exists $V_k \in \mathcal{U}_0^{(k)}$ and a Borel set $B_k \subseteq A$ with 
\[ \mu(B_k)  \gtrsim 1 \quad \text{and} \quad \left\lvert \pi_{iV_k}(z) \right\rvert \gtrsim \delta \quad \text{ for all } (z,t) \in B_k. \]
Therefore for each $k$ there exists $j= j(k) \in \{1, \dotsc, m\}$, $\sigma= \sigma(k) \in \{0,1\}$ and a Borel set $A_k \subseteq A$ such that 
\begin{equation} \label{signed} \mu(A_k) \gtrsim 1 \quad \text{and} \quad \omega\left( z, (-1)^{\sigma} v_j(V_k) \right) \gtrsim \delta \quad \text{ for all } (z,t) \in A_k. \end{equation}
If $\delta$ is sufficiently small (which may be assumed), then by \eqref{deltasquare} and \eqref{signed},
\[ \left\lvert \omega\left( z+\zeta ,v_j(V) \right) \right\rvert \gtrsim \delta \quad \text{for all } (z,t), (\zeta, \tau) \in A_k, \quad V \in \mathcal{U}_0^{(k)} \text{ and } j=j(k). \]
Since each $V$ is isotropic, it then follows from Lemmas~\ref{locallipschitz} and~\ref{reverselipschitz} that 
\[  d_{\mathbb{V} \setminus \mathbb{H}^n }\left(P_{\mathbb{V}^{\perp}}(z,t) , P_{\mathbb{V}^{\perp}}(\zeta,\tau)\right) \sim_{\delta} d_E\left(P_{\mathbb{V}^{\perp}}(z,t) , P_{\mathbb{V}^{\perp}}(\zeta,\tau)\right), \]
for all $(z,t), (\zeta, \tau) \in A_k$ and $V \in \mathcal{U}_0^{(k)}$. Therefore 
\[  \dim_E (P_{\mathbb{V}^{\perp} }(A) ) \geq  \dim_E (P_{\mathbb{V}^{\perp} }(A_k) ) = \dim_{\mathbb{V} \setminus \mathbb{H}^n }(P_{\mathbb{V}^{\perp}}(A_k)), \]
for all $k$ and $V \in \mathcal{U}_0^{(k)}$. Applying (i) for each $k$ gives
\[ \dim_E (P_{\mathbb{V}^{\perp} }(A) ) \geq \beta, \]
for $\mu_{n,m}$-a.e.~$V \in \mathcal{U}_0$. But $\mu_{n,m}(\mathcal{U}_0) \geq \mu_{n,m}(\mathcal{U}) - \sqrt{\epsilon}$ by \eqref{squareroot}, so letting $\epsilon \to 0$ and covering $G_h(n,m)$ with a finite number of such sets $\mathcal{U}$ gives  
\[ \dim_E (P_{\mathbb{V}^{\perp} }(A) ) \geq \beta, \]
for a.e.~$V \in G_h(n,m)$. This proves that (i) and (ii) are equivalent. \end{proof}
The preceding lemma and the following one actually hold for $\alpha \geq 0$, but for small $\alpha$ this follows from Theorem~\ref{Hnbounda} (since the a.e.~lower bounds for dimension that follow from Theorem~\ref{Hnbounda} are the same in the smaller range of $\alpha$, and by Theorem~\ref{Hnbounda} they are both sharp). The proof of the following lemma is omitted since it is virtually identical to the previous one, except that $s-1$ is used in \eqref{s0defn} instead of $s-2$.
\begin{lemma} \label{equivalence2} Fix $\beta \geq 0$, $n \geq 1$, $m\in \{1, \dots, n\}$ and $\alpha \in [1, 2n+1)$. The following two statements are equivalent.
\begin{enumerate}
\item For any Borel set $A \subseteq \mathbb{H}^n$ with $\dim_E A > \alpha$, 
\[ \dim_{\mathbb{V} \setminus \mathbb{H}^n } P_{\mathbb{V}^{\perp}}^R(A)  \geq \beta \quad \text{ for a.e. } V \in G_h(n,m). \]
\item
For any Borel set $A \subseteq \mathbb{H}^n$ with $\dim_E A > \alpha$, 
\[ \dim_E P_{\mathbb{V}^{\perp}}^R(A)  \geq \beta \quad \text{ for a.e. } V \in G_h(n,m). \]
\end{enumerate}
\end{lemma} 

 We restate Theorem~\ref{Hnbounda} here.
\begin{reptheorem}{Hnbounda} For $1 \leq m \leq n$ and any Borel set $A \subseteq \mathbb{H}^n$,  
\begin{equation} \label{euclidean2}
\dim_E P_{\mathbb{V}^{\perp}}^L (A), \ \dim_E P_{\mathbb{V}^{\perp}}^R (A) \geq\max\left\{ \min\left\{\dim_E A, 2n-m \right\},\dim_E A-m\right\}
\end{equation}
for $\mu_{n,m}$-a.e.~$V \in G_h(n,m)$, and
\begin{multline} \label{koranyi2} \dim_{\mathbb{V} \backslash \mathbb{H}^n} P_{\mathbb{V}^{\perp}}^R (A)\\ 
\geq \max\left\{ \min \left\{ \max\left\{\frac{\dim_{\mathbb{H}^n} A}{2}, \dim_{\mathbb{H}^n} A -1 \right\},2n-m \right\}, \dim_{\mathbb{H}^n}A -m-1 \right\}
\end{multline}
for $\mu_{n,m}$-a.e.~$V \in G_h(n,m)$. If $\dim_E A \leq 2n-m$ then \eqref{euclidean2} is sharp, and if $\dim_{\mathbb{H}^n} A \leq 2n+1-m$ then \eqref{koranyi2} is sharp. 
\end{reptheorem}
\begin{remark} By Lemmas~\ref{equivalence} and~\ref{equivalence2}, and by the method of proof used, this theorem holds verbatim if the left hand sides of \eqref{euclidean2} and \eqref{koranyi2} are interchanged.
\end{remark}
\begin{proof}[Proof of Theorem~\ref{Hnbounda}] The cases $P_{\mathbb{V}^{\perp}}^L$ and $P_{\mathbb{V}^{\perp}}^R$ in \eqref{euclidean2} are equivalent since $P_{\mathbb{V}^{\perp}}^L(p) = -P_{\mathbb{V}^{\perp}}^R(-p)$. 
For the remainder of the proof, the notation $P_{\mathbb{V}^{\perp}}$ will therefore denote $P_{\mathbb{V}^{\perp}}^R$.

The quotient distance on $\mathbb{V}^{\perp}$ is defined through the identification of $\mathbb{V}^{\perp}$ with $\mathbb{V} \backslash \mathbb{H}^n$ explained in Section \ref{rightcoset}; the formula is given by 
\[ d_{\mathbb{V} \backslash \mathbb{H}^n}(p,q) = \inf_{q' \in \mathbb{V}} d_{cc}(q' p, q ), \quad \text{where } p,q \in \mathbb{V}^{\perp}. \]
Since the metric $d_{\Hn}$ is bi-Lipschitz equivalent to $d_{cc}$, one can set \[ d_{\mathbb{V} \backslash \mathbb{H}^n}'(p,q) = \inf_{q' \in \mathbb{V}} d_{\Hn}(q' p, q ),\]
and trivially obtain that $\dvh$ and $\dvh'$ are bi-Lipschitz equivalent. For ease of computation we use $\dvh'$ instead of $\dvh$, and to simplify notation we denote $\dvh'$ by $\dvh$ as well. 

It may be assumed without loss of generality that $A$ is bounded. Let $\mu$ be a measure on $A$ with Euclidean $s$-energy $I_s(\mu, d_E) < \infty$, where $s:= \min \left\{\dim_E A ,2n-m\right\} - \epsilon$ for an arbitrarily small $\epsilon >0$. Assume $s>0$ without loss of generality. By Fubini, the average energy of the pushforward measure is 
\begin{multline*} \int_{G_h(n,m)} I_s(P_{\mathbb{V}^{\perp}\#} \mu, d_E) \, d\mu_{n,m}(V) \\
= \int_{\mathbb{H}^n} \int_{\mathbb{H}^n} \int_{G_h(n,m)} d_E\left(P_{\mathbb{V}^{\perp}}(z,t), P_{\mathbb{V}^{\perp}}(\zeta,\tau) \right)^{-s}  \, d\mu_{n,m}(V) \, d\mu(z,t) \, d\mu(\zeta,\tau). \end{multline*}
To prove the Euclidean lower bound in the first part of the minimum of \eqref{euclidean2}, it suffices to show
\begin{equation} \label{muenergy2} \int_{G_h(n,m)} d_E\left(P_{\mathbb{V}^{\perp}}(z,t), P_{\mathbb{V}^{\perp}}(\zeta,\tau) \right)^{-s} \, d\mu_{n,m}(V) \lesssim d_E((z,t), (\zeta,\tau))^{-s}. \end{equation}
The first half of this proof will be essentially the same as the proof of Theorem~1.2 in \cite{Hproj2}. Let $B(0,R)$ be a Euclidean ball containing $A$. If $|z-\zeta| \geq \frac{\left|t-\tau \right|}{4R}$, then
\begin{align} \notag & \int_{G_h(n,m)} d_E\left(P_{\mathbb{V}^{\perp}}(z,t), P_{\mathbb{V}^{\perp}}(\zeta,\tau) \right)^{-s} \, d\mu_{n,m}(V)   \\
\notag &\quad \lesssim \int_{G_h(n,m)} \left|\pi_{V^{\perp}}(z) -\pi_{V^{\perp}}(\zeta) \right|^{-s} \, d\mu_{n,m}(V) \\
\label{tysonetal2} &\quad\lesssim_s |z-\zeta|^{-s} &&\text{(since $s<2n-m$)} \\ 
\notag &\quad \lesssim_R d_E((z,t), (\zeta,\tau))^{-s}; \end{align}
the Euclidean inequality used in \eqref{tysonetal2} is explained in \cite[pp.~584-585]{Hproj2}, and has a fairly straightforward proof. This proves \eqref{muenergy2} in the case where $|z-\zeta| \geq \frac{\left|t-\tau \right|}{4R}$. 

In the second case with $|z-\zeta| < \frac{\left|t-\tau\right|}{4R}$, Cauchy-Schwarz gives
\begin{align} \label{cauchyschwarz} &\left\lvert \omega \left( \pi_{V_{\theta}}(z), \pi_{V_{\theta}^{\perp}}(z)\right) - \omega \left( \pi_{V_{\theta}}(\zeta), \pi_{V_{\theta}^{\perp}}(\zeta)\right) \right\rvert \\
\notag &= \left\lvert \omega \left( \pi_{V_{\theta}}(z-\zeta), \pi_{V_{\theta}^{\perp}}(z)\right) - \omega \left( \pi_{V_{\theta}}(\zeta), \pi_{V_{\theta}^{\perp}}(\zeta-z)\right) \right\rvert \leq 2R |z-\zeta|. \end{align}
Hence
\begin{align*}  &\int_{G_h(n,m)} d_E\left(P_{\mathbb{V}^{\perp}}(z,t), P_{\mathbb{V}^{\perp}}(\zeta,\tau) \right)^{-s} \, d\mu_{n,m}(V)  \\
&\quad \lesssim \int_{G_h(n,m)} \left\lvert t-\tau - \frac{1}{2} \omega(\pi_V(z), \pi_{V^{\perp}}(z) ) + \frac{1}{2} \omega( \pi_V(\zeta), \pi_{V^{\perp}}(\zeta) ) \right\rvert^{-s} \, d\mu_{n,m}(V), \\
&\quad \lesssim \left(\left\lvert t-\tau \right\rvert - R\left\lvert z-\zeta \right\rvert \right)^{-s}, \\
&\quad \lesssim |t-\tau|^{-s} \\
&\quad \lesssim_R d_E((z,t), (\zeta,\tau))^{-s}. \end{align*}
This proves the Euclidean lower bound for the first term in the maximum of \eqref{euclidean2}, which finishes the proof of \eqref{euclidean2} in the case $\dim_E A\leq 2n$.

The lower bound 
\[\dim_{E}P_{\Vp} (A) \geq\dim_{E}(A)-m\]
actually holds for all $\Vp$, provided $\dim_{E}A>m+1.$ Since the previous bound is stronger whenever $\dim_{E}A\leq 2n$, we may assume, without loss of generality, that $\dim_{E}A> 2n$. In particular $\dim_{E}A>m+1$. From here the proof follows the same lines as the proof of the same lower bound for the $\Hn$-dimension of (left coset) vertical projections from \cite[Theorem~1.4]{Hproj2}. Given $V\in\Gh$ the set $\{U\in\Gh: U^{\perp}\cap V=\{0\}\}$ is open, nonempty and in particular has positive $\munm$ measure. This, together with Theorem~\ref{EuclideanSlice}, lets us pick for $\epsilon>0$, $U\in\Gh$ and $u \in U$ such that the map $\pi_{V^{\perp}}\lfloor_{U^{\perp}}:U^{\perp}\to V^{\perp}$ is injective, and $\dim_{E}[A\cap(\Up*u)]\geq  \dim_{E}A-m-\epsilon$. For this particular choice of $U$ and $u$, we will see that $P_{\Vp}\lfloor_{\Up*u}:\Up*u\to\Vp$ is a locally bi-Lipschitz bijection with respect to the Euclidean norm.

First we show injectivity. For any $q\in(\U^{\perp}*u)$, there exists a unique  $w_{U^{\perp}}\in U^{\perp}$ and $s>0$ such that $q=(w_{U^{\perp}},s)*(u,0)$. Let $q=(w_{U^{\perp}},s)*(u,0) \in(\Up*u)$ and $q'=(z_{U^{\perp}},t)*(u,0)\in(\Up*u)$ be such that $P_{\Vp}(q)=P_{\Vp}(q')$. Then we have 
\begin{multline} \label{eqproj} \left(\pi_{V^{\perp}}(w_{U^{\perp}}+u),s+\frac{1}{2}\omega(w_{U^{\perp}},u)-\frac{1}{2}\omega(\pi_V(w_{U^{\perp}}+u),\pi_{V^{\perp}}(w_{U^{\perp}}+u)\right) \\
= \left(\pi_{V^{\perp}}(z_{U^{\perp}}+u) ,t+\frac{1}{2}\omega(z_{U^{\perp}},u)-\frac{1}{2}\omega(\pi_V(z_{U^{\perp}}+u),\pi_{V^{\perp}}(z_{U^{\perp}}+u)\right). \end{multline}
The first coordinate tells us that $\pi_{V^{\perp}}(z_{U^{\perp}}+u)=\pi_{V^{\perp}}(w_{U^{\perp}}+u)$ which says $\pi_{V^{\perp}}(z_{U^{\perp}})=\pi_{V^{\perp}}(w_{U^{\perp}})$. By our choice of $U\in\Gh$ we get that $z_{U^{\perp}}=w_{U^{\perp}}$. Similarly, the second coordinate gives us that $t=s$ so injectivity follows.
To see that the map is surjective, for $(z,t)\in\Vp$ put $\zeta=(\pi_{V^{\perp}}\lfloor_{U^{\perp}})^{-1}(z-\pi_{V^{\perp}}(u))+u$, and $\tau=t+\frac{1}{2}\omega(\pi_{V}(\zeta),\pi_{V^{\perp}}(\zeta))$. It follows that $(\zeta,\tau)\in\Up*u$ and $P_{\Vp}(\zeta,\tau)=(z,t)$. This shows that the map is surjective, but also gives us a formula for the inverse which shows this inverse map is smooth. Hence $P_{\V}\lfloor_{\Up*u}$ is a smooth map with a smooth inverse, and it is therefore locally bi-Lipschitz with respect to the Euclidean metric. By the choice of $\U$,
\begin{multline*}
\dim_E  P_{\Vp}(A)\geq  \dim_EP_{\Vp}(A\cap(\Up*u))
=\dim_E [A\cap(\Up*u)] \geq \dim_{E}A-m-\epsilon. 
\end{multline*}
Since $\epsilon$ can be chosen arbitrarily small, this proves the lower bound in \eqref{euclidean2}.
The lower bound in \eqref{koranyi2} follows from Lemma~\ref{locallipschitz} and the Dimension Comparison Principle applied to the lower bound in \eqref{euclidean2}.
The Dimension Comparison Principle says that for any set $B \subseteq \mathbb{H}^n$,
\begin{equation} \label{dimcomparison} \max\{ \dim_E B, 2 \dim_E B - 2n \} \leq \dim_{\mathbb{H}^n} B \leq \min\{ 2 \dim_E B , \dim_E B +1 \}. \end{equation}
This  comparison principle, as stated here,  appears in \cite[Eq. 1.4]{Hproj2}, see \cite{BRSC} for the original proof in $\mathbb{H}$ and see \cite{BTW} for the proof in the more general case of Carnot groups. 

The sharpness of the Euclidean lower bound in \eqref{euclidean2} will be deduced from the sharpness of the Heisenberg lower bound in \eqref{koranyi2}. The sharpness of \eqref{koranyi2} for $\dim_{\mathbb{H}^n} A \leq 2n+1-m$ will be proved in two separate cases. For an example with any Heisenberg dimension in the range $[0,2]$, let $\alpha \in [0,2]$ and let $A$ be a compact subset of the vertical segment
\[ \{(e_1,s) \in \mathbb{R}^{2n} \times \mathbb{R} =  \mathbb{H}^n: s \in  [-1/4,1/4]\}, \]
such that $\dim_{\mathbb{H}^n} A = 2 \dim_E A = \alpha$, where $e_j$ is the $j$-th standard basis vector in Euclidean space. Let 
\[ \mathcal{U} = \{ V \in G_h(n,m) : |\omega(e_1, w)| > 1/2 \text{ for some } w \in V \text{ with } |w| \leq 1 \}. \]
Then $\mathcal{U}$ is a nonempty open set, and so $\mu_{n,m}\left(\mathcal{U}\right) >0$. For $(e_1, s), (e_1,t) \in A$ and $V \in \mathcal{U}$, there exists $w \in V$ with $|w| \leq 2|s-t|$ such that $\omega(e_1,w)=s-t$. Hence
\begin{align*} d_{\mathbb{V} \backslash \mathbb{H}^n }\left( P_{\mathbb{V}^{\perp}}(e_1,s), P_{\mathbb{V}^{\perp}}(e_1, t) \right) &\sim \inf_{w \in V} \left( |w|^4 + \left|s-t+ \omega(w, e_1) \right|^2 \right)^{1/4} \\
&\sim |s-t| \\
&\sim  d_{\mathbb{H}^n}((e_1,s),(e_1, t))^2. \end{align*}
It follows that 
\[ \dim_{\mathbb{V} \backslash \mathbb{H}^n} P_{\mathbb{V}^{\perp}} (A) = \frac{\dim_{\mathbb{H}^n} A}{2} \quad \text{for $V \in \mathcal{U}$.} \]
This shows that \eqref{koranyi2} is sharp for $\dim_{\mathbb{H}^n} A \leq 2$, which by \eqref{dimcomparison} and Lemma \ref{locallipschitz}, implies the sharpness of \eqref{euclidean2} for $\dim_E A \leq 1$.

For an example with any Heisenberg dimension in the range $(2, 2n+1-m]$, 
 let $A$ be a set in $\mathbb{H}^n$ 
with
\begin{equation} \label{productconditions} A= \mathcal{C}_{\alpha} \times I, \quad \dim_E A = \alpha+1, \quad \dim_{\mathbb{H}^n} A = \alpha+2, \end{equation}
where $\mathcal{C}_{\alpha} \subseteq \mathbb{C}^n$ is a compact set of Euclidean dimension $\alpha \in [0, 2n-1]$ and $I$ is a compact interval of positive length (this is based on Case~2 from \cite[Section~4.1]{BRSC}; see also Appendix~\ref{appendix2} for an explicit construction). Then
\[ \dim_E P_{\mathbb{V}^{\perp}}(A) \leq \alpha+1 = \dim_{\mathbb{H}^n} A - 1, \]
for all $V \in G_h(n,m)$. By varying $\alpha$ and using Lemma~\ref{equivalence}, this shows that \eqref{koranyi2} is sharp if $2 \leq \dim_{\mathbb{H}^n} A \leq 2n+1-m$. By the Dimension Comparison Principle (see \eqref{dimcomparison}), this implies that the lower bound of \eqref{euclidean2} is sharp for $\dim_E A \leq 2n-m$. \end{proof}
Theorem~\ref{LeftCoset} now follows directly from Theorem~\ref{Hnbounda} and the Dimension Comparison Principle. 

To finish this section, we 
prove the relations in \eqref{conjecturerelations}.
\begin{proof}[Sharpness of conjectures and proof of implications in \eqref{conjecturerelations}] The equivalence of Conjecture~\ref{conjB} and Conjecture~\ref{conjF} follows from Theorem~\ref{Hnbounda} and Lemma~\ref{equivalence}. 
The equivalence of Conjecture~\ref{conjD} and Conjecture~\ref{conjE} is similar. The implication Conjecture~\ref{conjA} $\Rightarrow$ Conjecture~\ref{conjC} follows from dimension comparison on the left hand side. The two vertical implications in \eqref{conjecturerelations} both follow directly from dimension comparison on the right hand side.

The sharpness of Conjectures~\ref{conjC} and~\ref{conjF} follows from the same example as in Theorem~\ref{Hnbounda}; which works in the slightly larger range. The sharpness of the other conjectures is a consequence of the relations in \eqref{conjecturerelations}. \end{proof}

\section{An improved bound in \texorpdfstring{$\mathbb{H}$}{the first Heisenberg group}} \label{first}

In this section we prove a result for Euclidean dimension distortion under projections in $\mathbb{H}=\He^1$, which for $n=1$ improves Theorem~\ref{Hnbounda} in a small range. 

Since the family of nontrivial horizontal subgroups in $\mathbb{H}$ is the one dimensional family of lines in $\mathbb{C} \times \{0\}$ through the origin, the symbols $V_{\theta}$ and $\mathbb{V}_{\theta}$ will be used for the 1-dimensional subspaces containing $(\cos \theta , \sin \theta)$ and $(\cos \theta, \sin \theta, 0)$, respectively. In this section, the notation $P_{\mathbb{V}_{\theta}^{\perp}}$ will indicate either $P_{\mathbb{V}_{\theta}^{\perp}}^L$ or $P_{\mathbb{V}_{\theta}^{\perp}}^R$; the left/right designation will only be used if necessary. The proofs of Lemmas~\ref{transversality} and~\ref{paradox} use the right coset formula in computations, but by symmetry this is inessential.

The following (standard) lemma essentially says that the family of vertical projections obeys a weak version of transversality with respect to the Euclidean metric. 

\begin{lemma} \label{transversality} Let $R>0$. For any distinct $(z,t)$, $(\zeta, \tau) \in \mathbb{H} \cap B_E(0,R)$ and any $\delta \in (0,1)$, the set 
\[ \left\{ \theta \in [0,\pi) : d_E\left( P_{\mathbb{V}_{\theta}^{\perp}}(z,t),  P_{\mathbb{V}_{\theta}^{\perp}}(\zeta,\tau) \right) < \delta \right\} \]
is contained in $\lesssim 1$ intervals of length $\lesssim_R \frac{\delta}{d_E((z,t), (\zeta, \tau) )}$. \end{lemma}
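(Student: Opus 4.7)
The plan is to parameterize horizontal subgroups in $\mathbb{H}^1$ by $V_{\theta} = \operatorname{span}(\cos\theta,\sin\theta)$, use formula~\eqref{projetionformula} to write the difference $g(\theta) := P_{\mathbb{V}_{\theta}^{\perp}}(z,t) - P_{\mathbb{V}_{\theta}^{\perp}}(\zeta,\tau) \in \mathbb{R}^2\times\mathbb{R}$ explicitly, and analyze its two components separately. The horizontal component is $\pi_{V_{\theta}^{\perp}}(z-\zeta)$, with magnitude $|z-\zeta|\,|\sin(\alpha-\theta)|$, where $\alpha$ is the angle of $z-\zeta\in\mathbb{R}^2$. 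A short computation with $z=(x,y)$ shows
\[
\omega(\pi_{V_{\theta}} z,\pi_{V_{\theta}^{\perp}} z) = \tfrac{1}{2}(y^2-x^2)\sin(2\theta) + xy\cos(2\theta),
\]
so the last component of $g(\theta)$ is a function $f(\theta)$ equal to $(t-\tau)$ plus an oscillatory expression in $2\theta$ whose coefficients are built from degree-two polynomials in the coordinates of $z$ and $\zeta$.

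The key observation is that those coefficients factor: $y^2-\eta^2 = (y-\eta)(y+\eta)$, $xy-\xi\eta = y(x-\xi)+\xi(y-\eta)$, and similarly for the $x$--$\xi$ terms. Combined with the hypothesis $(z,t),(\zeta,\tau)\in B_E(0,R)$, these factorizations yield the uniform estimate
\[
|f(\theta)-(t-\tau)| \,\lesssim_R\, |z-\zeta| \qquad \text{for all }\theta\in[0,\pi).
\]
Since $|g(\theta)|<\delta$ forces both $|\pi_{V_{\theta}^{\perp}}(z-\zeta)|<\delta$ and $|f(\theta)|<\delta$, the rest of the argument is a two-case split on the relative sizes of $|z-\zeta|$ and $|t-\tau|$.

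In the horizontal regime $|t-\tau| \leq C_R|z-\zeta|$ one has $d_E((z,t),(\zeta,\tau))\sim_R |z-\zeta|$, and the standard transversality of planar Euclidean orthogonal projections confines $\{\theta : |\pi_{V_{\theta}^{\perp}}(z-\zeta)|<\delta\}$ to a single interval of length $\lesssim \delta/|z-\zeta| \lesssim_R \delta/d_E((z,t),(\zeta,\tau))$, as required. In the vertical regime $|t-\tau| > C_R|z-\zeta|$, with $C_R$ chosen from the key estimate above, one gets $|f(\theta)|\geq |t-\tau|/2$ uniformly; the requirement $|f(\theta)|<\delta$ then forces $|t-\tau|\lesssim\delta$, and combined with $|z-\zeta|<|t-\tau|/C_R$ this gives $d_E((z,t),(\zeta,\tau))\lesssim_R\delta$, so $\delta/d_E((z,t),(\zeta,\tau))\gtrsim_R 1$ and the full interval $[0,\pi)$ trivially fits inside a single interval of the allowed length.

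The only real content is the uniform bound on $|f(\theta)-(t-\tau)|$, which follows immediately from the elementary polynomial factorizations and the bound $R$ on the coordinates. After that the case split is routine bookkeeping, so I do not anticipate any serious obstacle; the mild delicacy is simply choosing the threshold constant $C_R$ consistently between the two regimes.
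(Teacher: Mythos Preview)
Your proposal is correct and follows essentially the same approach as the paper: the same two-case split on $|t-\tau|$ versus $|z-\zeta|$, the same key estimate $|f(\theta)-(t-\tau)|\lesssim_R |z-\zeta|$ (the paper obtains it via Cauchy--Schwarz rather than explicit factorization, but the content is identical), and the same conclusions in each case. One cosmetic point: in the horizontal regime the set $\{\theta\in[0,\pi):|\pi_{V_\theta^\perp}(z-\zeta)|<\delta\}$ can be two intervals rather than one when the zero of $\sin(\alpha-\theta)$ falls near an endpoint of $[0,\pi)$, but this is harmless for the stated conclusion.
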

\begin{proof} Suppose that $|z-\zeta| \geq \frac{|t-\tau|}{2R}$. Then 
\begin{multline*} \left\{ \theta \in [0,\pi) : d_E\left( P_{\mathbb{V}_{\theta}^{\perp}}(z,t),  P_{\mathbb{V}_{\theta}^{\perp}}(\zeta,\tau) \right) < \delta \right\} \\
 \subseteq  \left\{ \theta \in [0,\pi) : \left\lvert \pi_{V_{\theta}^{\perp}}(z) - \pi_{V_{\theta}^{\perp}}(\zeta) \right\rvert < \delta \right\}. \end{multline*}
By scaling, rotation and by transversality of the zeroes of $\theta \mapsto \sin \theta$, the right hand side is contained in at most 2 intervals of length $\lesssim \frac{\delta}{\lvert z-\zeta\rvert} \lesssim_R \frac{\delta}{d_E((z,t), (\zeta, \tau) )}$. This proves the lemma in case $|z-\zeta| \geq \frac{|t-\tau|}{2R}$. 

Now suppose that $|z-\zeta| < \frac{|t-\tau|}{2R}$. In this case if $|t-\tau| < 2\delta$ the lemma is trivial, so assume $|t-\tau| \geq 2\delta$. Then 
\begin{multline} \label{lastcase} \left\{ \theta \in [0,\pi) : d_E\left( P_{\mathbb{V}_{\theta}^{\perp}}(z,t),  P_{\mathbb{V}_{\theta}^{\perp}}(\zeta,\tau) \right) < \delta \right\} \subseteq \\
 \left\{ \theta \in [0,\pi) : \left\lvert t-\tau - \frac{1}{2} \omega \left( \pi_{V_{\theta}}(z), \pi_{V_{\theta}^{\perp}}(z)\right)+ \frac{1}{2} \omega \left( \pi_{V_{\theta}}(\zeta), \pi_{V_{\theta}^{\perp}}(\zeta)\right)  \right\rvert < \delta \right\}. \end{multline}
Similarly to \eqref{cauchyschwarz}, Cauchy-Schwarz gives
\begin{equation} \label{cliche} \left\lvert t-\tau - \frac{1}{2} \omega \left( \pi_{V_{\theta}}(z), \pi_{V_{\theta}^{\perp}}(z)\right)+ \frac{1}{2} \omega \left( \pi_{V_{\theta}}(\zeta), \pi_{V_{\theta}^{\perp}}(\zeta)\right)  \right\rvert \geq \frac{\left\lvert t-\tau \right\rvert }{2}. \end{equation}
Since $|t-\tau| \geq 2\delta$, the set in the right hand side of \eqref{lastcase} is empty, and this finishes the proof.  \end{proof}

The following lemma is the main result of this section, which will be converted to a projection theorem via a standard technique.  The proof will only be sketched since it is similar to the case of Euclidean projections in $\mathbb{R}^3$ \cite{OrpVen}, and also to the Korányi metric case of left projections in $\mathbb{H}$ \cite{THarris2}; the main emphasis will be on the steps which differ from \cite{THarris2}.

\begin{lemma} \label{paradox} Fix $s>1$, and let $\nu$ be a compactly supported Borel measure on $\mathbb{H}$ such that 
\[ \sup_{\substack{ x \in \mathbb{H} \\ r >0 } } \frac{ \nu(B_E(x,r) ) }{r^s} < \infty. \]
For any $\kappa > 
\frac{2(s-1)}{3}$, there exist $\delta_0$, $\eta >0$ such that 
\begin{equation} \label{russell} \nu\left\{ x \in \mathbb{H} : \mathcal{H}^1 \left\{ \theta \in [0,\pi) : P_{\mathbb{V}_{\theta}^{\perp} \#}\nu \left( B_E\left(P_{\mathbb{V}_{\theta}^{\perp}}(x), \delta\right) \right) \geq \delta^{s-\kappa} \right\} \geq \delta^{\eta} \right\} \leq \delta^{\eta}, \end{equation}
for all $\delta \in (0,\delta_0)$. 
\end{lemma}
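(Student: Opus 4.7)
Set
\[
f(x,\theta) := P_{\Vtp \#}\nu\bigl(B_E(P_{\Vtp}(x), \delta)\bigr), \qquad F(x) := \mathcal{H}^1\bigl\{\theta \in [0,\pi): f(x,\theta) \geq \delta^{s-\kappa}\bigr\},
\]
so the goal is $\nu\{F \geq \delta^\eta\} \leq \delta^\eta$. The plan is to reduce to an $L^p$-moment bound on $f$ via Chebyshev, and then estimate the moment using Lemma~\ref{transversality} together with the Frostman hypothesis on $\nu$.

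\emph{Reduction.} Two applications of Markov's inequality give, for every $p \geq 1$,
\[
\nu\{F \geq \delta^\eta\} \;\leq\; \delta^{-\eta} \!\int F \, d\nu \;\leq\; \delta^{-\eta - p(s - \kappa)} \iint f(x,\theta)^p \, d\theta \, d\nu(x).
\]
So it is enough to show $\iint f^p \lesssim \delta^{\alpha_p}$ with $\alpha_p - p(s-\kappa) > 2\eta$.

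\emph{The $L^2$ estimate (first threshold).} Take $p = 2$. Expanding and swapping orders,
\[
\iint f^2 \, d\theta \, d\nu = \iiint \int \mathbf{1}_{|P_\theta x - P_\theta y_1|<\delta}\,\mathbf{1}_{|P_\theta x - P_\theta y_2|<\delta} \, d\theta \, d\nu(x) \, d\nu(y_1) \, d\nu(y_2).
\]
I would integrate the $\theta$-variable using Lemma~\ref{transversality} applied to the implied constraint $|P_\theta y_1 - P_\theta y_2| < 2\delta$, giving a factor $\lesssim \delta/d_E(y_1,y_2)$, and integrate the $x$-variable using the Frostman bound for the $\nu$-measure of the intersection of the two $\delta$-tubes around the fibers of $P_\theta$ at $y_1$ and $y_2$. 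Using the refined tube-intersection estimate together with the $s > 1$ Frostman integrability $\iint d\nu(y_1) d\nu(y_2)/d_E(y_1,y_2) \lesssim 1$, this yields $\iint f^2 \lesssim \delta^{s+1}$, which via the reduction gives $\kappa > (s-1)/2 + \eta$.

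\emph{The higher-moment refinement (second threshold).} For $s > 3/2$ the first threshold is dominated by $3(s-1)^2/(4s-3)$, and the $L^2$ estimate above is insufficient. The plan is to use an $L^p$ moment with $p > 2$ (expanding $f^p$ over $(p+1)$-tuples and applying Lemma~\ref{transversality} pairwise), combined with a dyadic multi-scale decomposition of $\operatorname{supp}(\nu)$ at an intermediate scale $\rho \in (\delta, 1)$, separating mass concentrated at scale $\rho$ from dispersed mass. The specific exponent $3(s-1)^2/(4s-3)$ arises from optimizing jointly over $\rho$ and $p$.

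\emph{Main obstacle.} The key technical point is obtaining the improved bound $\iint f^2 \lesssim \delta^{s+1}$ rather than the naive $\delta^s$ (which would only give the weaker threshold $\kappa > s/2$). Extracting the extra factor of $\delta$ requires exploiting the planar geometry of $\Vtp$ and the fact that the fibers of $P_\theta$ are lines in $\mathbb{R}^3$ whose mutual angle, via the symplectic structure of $\He^1$, depends on the base points; the transverse and near-parallel regimes must be handled separately. The second threshold poses an analogous but more intricate obstacle, as the exponent $3(s-1)^2/(4s-3)$ requires precise multi-scale bookkeeping and optimization in $\rho$ and $p$.
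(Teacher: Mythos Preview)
Your proposal is a sketch that identifies the right Chebyshev reduction, but it does not resolve the main issue, and the $L^2$ route you outline does \emph{not} reach the threshold $\kappa>(s-1)/2$. Carrying out your $L^2$ computation: for fixed $y_1,y_2$, the constraints $|P_\theta x-P_\theta y_i|<\delta$ force $|P_\theta y_1-P_\theta y_2|<2\delta$, so by Lemma~\ref{transversality} the admissible $\theta$-set has length $\lesssim \delta/d_E(y_1,y_2)$. For such $\theta$ the two fiber-tubes through $y_1,y_2$ essentially coincide (the right cosets of $\mathbb{V}_\theta$ have direction vectors differing in the $t$-component by $\tfrac12(\pi_{V_\theta^\perp}(z_1)-\pi_{V_\theta^\perp}(z_2))=O(\delta)$), so integrating in $x$ only yields the mass of a single $\delta$-tube, namely $\lesssim\delta^{s-1}$. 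Hence
\[
\iint f^2\,d\theta\,d\nu \;\lesssim\; \delta^{s-1}\iint \frac{\delta}{d_E(y_1,y_2)}\,d\nu(y_1)\,d\nu(y_2)\;\lesssim\;\delta^{s},
\]
which through your reduction gives only $\kappa>s/2$, not $(s-1)/2$. There is no ``transverse regime'' to exploit here: once $|P_\theta y_1-P_\theta y_2|<2\delta$, the two tubes are automatically $O(\delta)$-parallel, so the extra factor of $\delta$ you need cannot come from a tube-intersection gain. The second threshold is left entirely unaddressed beyond the phrase ``multi-scale bookkeeping and optimization''.

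The paper's argument (following \cite{OrpVen,THarris2}) is structurally different. After pigeonholing to a single annular scale $t$, one extracts for each bad $x$ \emph{three} direction sets $H_1(x),H_2(x),H_3(x)$ that are $\approx 1$-separated, and counts quadruples $(x,x_1,x_2,x_3)$ with $x\sim_i x_i$ and a mild non-collinearity constraint on the $z$-coordinates. The upper bound comes from fixing $x_1,x_2,x_3$ and observing that the three relations $|P_{\theta_i}x-P_{\theta_i}x_i|<\delta$ confine $x$ via an affine map $G:\mathbb{R}^3\to\mathbb{R}^3$ whose Jacobian is $\tfrac14|\omega(z_1,z_2)+\omega(z_2,z_3)+\omega(z_3,z_1)|\gtrapprox t\delta^{\alpha}$; this pins $x$ down to a ball, not a tube, and is what produces both thresholds $(s-1)/2$ and $3(s-1)^2/(4s-3)$ after optimizing in $t$. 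The point is that two constraints in $\mathbb{R}^3$ leave a line, but three transverse constraints leave a point --- this is the idea your $L^2$ plan is missing, and it cannot be recovered by passing to generic $L^p$ moments without the angle-separation pigeonholing.
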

\begin{proof}  Assume without loss of generality that $\nu$ is supported in the unit ball, and that $\kappa < s-1$. Choose $\eta$ with
\begin{equation}  \label{etaassumption} 0< \eta \ll   \kappa - \frac{2(s-1)}{3}, \end{equation}
where the right hand side is positive by assumption. Define $A \lessapprox B$ to mean $A \lesssim \delta^{-O(\eta)} B$, and write $A \approx B$ if $A \lessapprox B$ and $B \lessapprox A$.

Let $Z$ be the set of $x$'s occurring in \eqref{russell}. The argument that follows works for any $\delta>0$ sufficiently small, so we assume $\delta_0$ has been suitably chosen and $\delta \in (0, \delta_0)$. For any such $\delta$, dyadic pigeonholing gives a set $Z' \subseteq Z$ with $\nu(Z') \approx \nu(Z)$ and a fixed dyadic number $t$ with $\delta \leq t \lesssim 1$, such that for each $x \in Z'$ there are three sets $H_1(x), H_2(x), H_3(x) \subseteq [0,\pi)$ that are $\approx 1$-separated for each $x$, each with $\mathcal{H}^1$-measure $\approx 1$, such that 
\begin{equation} \label{pushfail}  \nu \left(  A_E(x,t,2t)  \cap  P_{\mathbb{V}_{\theta}^{\perp}}^{-1} \left(B_E(P_{\mathbb{V}_{\theta}^{\perp}}(x), \delta) \right) \right) \gtrapprox \delta^{s-\kappa} \quad \text{for all } \theta \in H_i(x), \end{equation}
where $A_E(x,t,2t)$ is the Euclidean annulus around $x$ of inner radius $t$ and outer radius $2t$. This pigeonholing step is virtually identical to those in \cite{OrpVen} and \cite{THarris2} (where more details are provided). 

Let 
\begin{equation} \label{alphadefn} \alpha = \frac{s-1-\kappa+O(\eta)}{s-1}, \end{equation}
and let 
\[ \Lambda = 
\begin{cases} \Big\{ (x,x_1,x_2,x_3) \in Z' \times \left( \mathbb{H} \right)^3 : \\ \qquad d_E(z_2, \ell(z_1, z_3) ) \geq \delta^{\alpha} \text{ if } |z-z_1|, |z-z_3| \geq t/2 \Big\}, & t\gtrapprox \delta^{\alpha}, \\
\\
Z' \times \left( \mathbb{H} \right)^3, & t\lessapprox \delta^{\alpha}, \end{cases} \]
where $x=(z,\tau)$ and $\ell(z,w)$ is the line through $z$ and $w$ in $\mathbb{R}^2$. 
The lemma will follow from the outer two parts of 
\begin{equation} \label{maininequality} \nu(Z) t^3 \delta^{3(s-\kappa-1)} \lessapprox \nu^4\left\{ (x,x_1,x_2,x_3) \in \Lambda : x \sim_i x_i \text{ for all } i \right\} \\
\lessapprox \begin{cases} \delta^{(1-\alpha)s} 
t^s & t \gtrapprox \delta^{\alpha}, \\
t^{3s} & t \lessapprox \delta^{\alpha}. \end{cases} \end{equation}
where $x \sim_i x_i$ means that 
\begin{equation} \label{equivrelation} t \leq d_E(x,x_i) < 2t \quad \text{and} \quad  d_E\left( P_{\mathbb{V}_{\theta}^{\perp}}(x), P_{\mathbb{V}_{\theta}^{\perp}}(x_i) \right) < \delta, \end{equation}
for some angle $\theta \in H_i(x)$. 

The lower bound of \eqref{maininequality} essentially follows by fixing $x \in Z'$, establishing the lower bound $t \delta^{s-\kappa-1}$ on the $\nu$-measure of the set of $x_i$'s satisfying $x \sim_i x_i$, integrating over $x_1$, $x_2$ and $x_3$ to get $t^3 \delta^{3(s-\kappa-1)}$, integrating over $x \in Z'$ and using $\nu(Z) \approx \nu(Z')$. This argument is similar to the one in \cite{OrpVen}, except that here as in \cite{THarris2} the points $(x,x_1, x_2, x_3)$ have the additional requirement that they must be in $\Lambda$. For the lower bound $t \delta^{s-\kappa-1}$ on the $\nu$-measure of the set of $x_i$'s satisfying $x \sim_i x_i$, the proof proceeds by sorting the points $x_i$ according to the interval $I_k$ of length $\delta/t$ containing the corresponding angle $\theta$ in \eqref{equivrelation}, using \eqref{pushfail} to bound the contribution of these points below by $\delta^{s-\kappa}$ and then adding up $\approx t\delta^{-1}$ such intervals (this is where Lemma~\ref{transversality} is needed to ensure disjointness).

If $t \lessapprox \delta^{\alpha}$ this proves the lower bound of \eqref{maininequality}. If $t \gtrapprox \delta^{\alpha}$, then to adjust this argument to accommodate the requirement that $(x,x_1,x_2,x_3) \in \Lambda$, group the intervals $I_k$ of length $\delta/t$ into larger intervals $J_j$ of length $\delta^{\alpha}/t$, so that each group contributes $\approx \delta^{\alpha-1} \delta^{s-\kappa}$ to the lower bound. It suffices to show that for fixed $x,x_1, x_3$ with 
\[ |z-z_1|, |z-z_3| \geq t/2, \]
and fixed $j$, the set 
\begin{multline*} E:= \{ x_2 = (z_2, t_2) \in \mathbb{H} : d_E(z_2, \ell(z_1, z_3) ) < \delta^{\alpha}, \\
\text{ \eqref{equivrelation} holds for some $\theta \in H_2(x) \cap J_j$} \}  \end{multline*}
is contained in a Euclidean ball of radius $\approx \delta^{\alpha}$; the excision of this set will therefore not harm the lower bound of $\delta^{\alpha-1+s-\kappa}$ since $\delta^{\alpha s}$ is much smaller than $\delta^{\alpha-1+s-\kappa}$, by the definition of $\alpha$ in \eqref{alphadefn} (provided the $O(\eta)$ factor is chosen sufficiently large). 

To see that $E$ is contained in a ball of radius $\approx \delta^{\alpha}$, fix some $x_2 = (z_2, \tau_2) \in E$. The projection of $E$ down to $\mathbb{R}^2 \times \{0\}$ will be shown to be contained in 
\begin{equation} \label{twolines} \mathcal{N}_{\delta^{\alpha}}( \ell(z_1, z_3) ) \cap  \mathcal{N}_{C\delta^{\alpha}}(\ell(z,z_2)), \end{equation}
where $\mathcal{N}$ refers to Euclidean neighbourhood. The first set in the intersection comes from the definition of $E$. For the second set, by \eqref{equivrelation} the line $\ell(z,z_2)$ is at an angle of $\theta$ to the $x$-axis (up to an error $\lesssim \delta/t$), where $\theta$ is the angle from \eqref{equivrelation}. Since by definition of $E$ the corresponding angles of all other points in $E$ have been grouped into one interval of length $\delta^{\alpha}/t$, all other lines $\ell(z,z_2')$ with $x_2' \in E$ are within an angle $\lesssim \delta^{\alpha}/t$ of the line $\ell(z,z_2)$. Since by \eqref{equivrelation} all points $z_2' \in E$ satisfy $|z_2'-z| \leq 2t$, it follows that the part of $E$ in all of these lines is contained in $\mathcal{N}_{C\delta^{\alpha}}(\ell(z,z_2))$ for some large enough constant $C$. This proves the projection of $E$ down to $\mathbb{R}^2 \times \{0\}$ is contained in the set in \eqref{twolines}. The set in \eqref{twolines} is contained in a ball of radius $\approx \delta^{\alpha}$; this follows from $\delta^{O(\eta)}$-transversality of the lines $\ell(z_1,z_3)$ and $\ell(z,z_2)$. This transversality is a simple geometric consequence of the angle separation assumption on the sets $H_i(x)$; an explicit proof is given in \cite{THarris2}. It remains to bound the distances between the last coordinate. By \eqref{equivrelation} and the preceding argument, any two points $(z_2,\tau_2)$ and $(z_2', \tau_2')$ in $E$ satisfy 
\begin{equation} \label{three} |z_2-z_2'| \lessapprox \delta^{\alpha}, \quad \left\lvert \pi_{V_{\theta}^{\perp}}(z-z_2) \right\rvert < \delta, \quad  \left\lvert \pi_{V_{\theta'}^{\perp}}(z-z_2') \right\rvert < \delta, \end{equation}
\begin{equation} \label{tcomp} \left\lvert \tau -\tau_2 - \frac{1}{2} \omega \left( \pi_{V_{\theta}}(z), \pi_{V_{\theta}^{\perp}}(z)\right)+ \frac{1}{2} \omega \left( \pi_{V_{\theta}}(z_2), \pi_{V_{\theta}^{\perp}}(z_2)\right)  \right\rvert < \delta, \end{equation}
and 
\begin{equation} \label{tcomp2} \left\lvert \tau-\tau_2' - \frac{1}{2} \omega \left( \pi_{V_{\theta'}}(z), \pi_{V_{\theta'}^{\perp}}(z)\right)+ \frac{1}{2} \omega \left( \pi_{V_{\theta'}}(z_2'), \pi_{V_{\theta'}^{\perp}}(z_2')\right)  \right\rvert < \delta, \end{equation}
for some $\theta$ and $\theta'$. Combining the second and third parts of \eqref{three} with \eqref{tcomp} and \eqref{tcomp2} respectively yields 
\begin{equation} \label{trick} \left\lvert \tau -\tau_2 - \frac{1}{2} \omega(z,z_2)  \right\rvert \lesssim \delta, \quad \left\lvert \tau -\tau_2' - \frac{1}{2} \omega(z,z_2')  \right\rvert \lesssim \delta. \end{equation}
Combining this with the first part of \eqref{three} and using the triangle inequality gives 
\[ \left\lvert \tau_2 - \tau_2' \right\rvert \lessapprox \delta^{\alpha}, \]
which proves that $E$ is contained in a Euclidean ball of radius $\approx \delta^{\alpha}$, and finishes the proof of the lower bound of \eqref{maininequality}.

For the upper bound, the case $t \lessapprox \delta^{\alpha}$ follows by integrating over $(x,x_1,x_2,x_3)$ and using the Frostman condition on $\nu$, so assume that $t \gtrapprox \delta^{\alpha}$. Let
\[ A = A(x_1,x_2,x_3) = \{ x \in Z' : (x,x_1,x_2,x_3) \in \Lambda \text{ and } x \sim_i x_i \text{ for all } i \}. \]
The upper bound in \eqref{maininequality} will be shown by bounding $\nu(A)$ and then integrating over $(x_1,x_2,x_3)$. Let 
\[ A' = \{ x  \in A: |\tau-\tau_i|/10 \leq |z-z_i| \text{ for all } i \}, \]
where $x= (z,\tau)$. By similar working to that used to show \eqref{trick}, 
\[ A' \subseteq G^{-1}(B_E(0, C \delta)), \]
for some large constant $C$, where $G: \mathbb{R}^3 \to \mathbb{R}^3$ is the affine map
\[ G(z,\tau) = \begin{pmatrix} \tau-\tau_1 - \frac{1}{2} \omega( z, z_1 ) \\
\tau-\tau_2 - \frac{1}{2} \omega( z, z_2 ) \\
\tau-\tau_3 - \frac{1}{2} \omega( z, z_3 ) \end{pmatrix}. \]
As in the left projection case (\cite{THarris2}), the Jacobian satisfies 
\[ \left\lvert \det DG \right\rvert = \frac{1}{4} \left\lvert \omega(z_1, z_2)  +  \omega(z_2, z_3) +  \omega(z_3, z_1) \right\rvert \gtrapprox t \delta^{\alpha}, \]
by the definition of $\Lambda$. Hence 
\[ A' \subseteq G^{-1}(B_E(0, C \delta))  \subseteq B_E( G^{-1}(0), t^{-1}\delta^{1-\alpha-O(\eta)} ). \]
It follows that 
\begin{equation} \label{detbound} \nu(A') \lessapprox \delta^{(1-\alpha)s} t^{-s}. \end{equation}
To bound $\nu(A)$ it remains to bound $\nu(A \setminus A')$. If $x \in A \setminus A'$ then $|z-z_i| < |t-t_i|/10$ for some $i$, so by the condition $x \sim_i x_i$ and by similar working to \eqref{cauchyschwarz}, there exists $\theta$ such that
\begin{align*} d(x, x_i) &\lesssim |t-t_i| \\
&\lesssim \left\lvert t-t_i - \frac{1}{2} \omega \left( \pi_{V_{\theta}}(z), \pi_{V_{\theta}^{\perp}}(z)\right)+ \frac{1}{2} \omega \left( \pi_{V_{\theta}}(z_i), \pi_{V_{\theta}^{\perp}}(z_i)\right)  \right\rvert \\
&< \delta. \end{align*} 
Hence $\nu(A \setminus A') \lesssim \delta^{s}$. Combining with \eqref{detbound} gives
\[ \nu(A) \lessapprox \max\left\{ \delta^{(1-\alpha)s} t^{-s}, \delta^{s} \right\} \lessapprox \delta^{(1-\alpha)s} t^{-s}, \]
since $t \lesssim 1$. Integrating over $x_1, x_2, x_3$ gives
\[ \nu^4\left\{ (x,x_1,x_2,x_3) \in \Lambda : x \sim_i x_i \text{ for all } i \right\} \lessapprox  \delta^{(1-\alpha)s} t^{s}, \]
which is the upper bound of \eqref{maininequality}. 

If $t \lessapprox \delta^{\alpha}$, then combining the lower and upper bounds of \eqref{maininequality} gives 
\[ \nu(Z) t^3 \delta^{3(s-\kappa-1)} \lessapprox t^{3s}. \]
Since $s >1$, this simplifies to 
\[ \nu(Z) \lessapprox \delta^{3\alpha(s-1) - 3(s-\kappa-1)}, \]
and therefore $\nu(Z) \leq \delta^{\eta}$ by the definition of $\alpha$ in \eqref{alphadefn}. This finishes the proof if $t \lessapprox \delta^{\alpha}$. 

Now assume $t \gtrapprox \delta^{\alpha}$. In this case the lower and upper bounds of \eqref{maininequality} give 
\begin{equation} \label{nuzbound}  \nu(Z) t^3 \delta^{3(s-\kappa-1)} \lessapprox \delta^{(1-\alpha)s} t^{s}. \end{equation}

Since $s>1$, using $t \gtrapprox \delta^{\alpha}$ and simplifying gives
\begin{align*} \nu(Z) &\lessapprox \delta^{ (1-\alpha)s + \alpha(s-3) -3(s-1-\kappa) } \\
&\approx \delta^{ (1-\alpha)s  + \alpha(s-3)- 3(s-1)\alpha } &&\text{(by \eqref{alphadefn})} \\
&= \delta^{s(1-3\alpha)} \\
&= \delta^{\frac{3s}{s-1} \left( \kappa - \frac{2(s-1)}{3} - O(\eta)\right) } &&\text{(by \eqref{alphadefn})}. \end{align*}
%
%
Hence
\[ \nu(Z) \leq \delta^{\eta}, \]
by the assumption $\eta \ll  \kappa - \frac{2(s-1)}{3} $ in \eqref{etaassumption}.
%
This proves the lemma. \end{proof}

\begin{corollary} \label{1dbound} Let $A \subseteq \mathbb{H}$ be a Borel set. If $\dim_E A > 1$, then  
\[ \dim_E P_{\mathbb{V}_{\theta}^{\perp}} (A) \geq  
\frac{2+\dim_E A}{3}\]
for a.e.~$\theta \in [0,\pi)$, and if $\dim_{\mathbb{H}} A>2$, then 
\[ \dim_{\mathbb{V}_{\theta} \backslash \mathbb{H}} P_{\mathbb{V}_{\theta}^{\perp}}^R (A) \geq 
\frac{1+\dim_{\mathbb{H}} A}{3}, \]
for a.e.~$\theta \in [0,\pi)$.  \end{corollary}
\begin{proof} The Euclidean part follows from \cite[Lemma~2.1]{THarris2}, which says that any result of the type in Lemma~\ref{paradox} implies a corresponding projection theorem with lower bound $s-\kappa$ for sets of dimension $s$. 

The non-Euclidean part for right coset projections follows from the Euclidean bound, the dimension comparison principle and Lemma~\ref{locallipschitz}. \end{proof}

\section{Open questions}
\subsection*{Sharp Euclidean lower bounds} The Euclidean lower bound in Theorem~\ref{Hnbounda} is probably not sharp in the entire range. So the first obvious way to further this work would be to improve this bound, ideally finding sharp dimension distortion bounds. Since the projection maps are now viewed as maps from $\R^{2n+1}$ to $\R^{2n-m+1}$, purely Euclidean methods could in principle be applied to improve dimension distortion bounds. For instance, Fourier restriction methods used for example in \cite{OberOber} might lead to improvements. As we showed, when studying the problem as a Euclidean one, left and right coset projections cause the same dimension distortion. Therefore improving the bound in this direction could further improve the bound for the two problems relative to the more natural metrics that go with each one. 

\subsection*{Sharp \texorpdfstring{$\VH$}{} lower bounds} The method we employed here was to study the problem as a Euclidean one, and then apply the Dimension Comparison Principle to obtain dimension distortion bounds with respect to the more natural metric $\dvh$. So our bounds are obtained considering the worst dimension distortion by projections and the worst dimension drop by dimension comparison. In principle, these two things need not happen simultaneously so better bounds could potentially be obtained by considering the maps $P_{\V^{\perp}}^R$ as maps from $(\Hn,d_{\Hn})$ to $(\V^{\perp},\dvh)$ or $(\V^{\perp},d_E)$ and estimating energy integrals with respect to these metrics directly.

\subsection*{Projections of subsets with specific structure}
In \cite{HProj1} the authors gave evidence to their conjectured almost sure lower bound, in $\He$, by exhibiting some subsets of $\He$ with specific structure that do adhere to their conjecture. For instance, if the set $S$ is either a $\mathcal{C}^1$ curve or a $\mathcal{C}^1 $ surface then $\dim_{\He}P_{\theta}^LS\geq\dim_{\He}S$ for all but at most 2 values of $\theta$. Does something similar hold in higher dimensions for the projections $P_{\V^{\perp}}^L$ and/or $P_{\V^{\perp}}^R$? 

\subsection*{Structure of \texorpdfstring{$(\V^{\perp},\dvh)$}{vertical subgroups with the quotient metric}}
This problem was mentioned to in Section \ref{rightcoset}. The properties of this space discussed in that section hint that it might have the structure of a non-equiregular Carnot-Carath\'{e}odory space. So the problem is that of finding bracket generating vector fields in $\R^{2n-m+1}$ such that $\R^{2n-m+1}$ with the induced Carnot-Carath\'{e}odory distance is isometrically (or at least bi-Lipschitz) equivalent to $(\Vp,d_{\VH})$. Such a description of the space may also lead to improvements in dimension distortion bounds by projections as it could provide a better understanding of the metric itself.


\appendix
\section{A slicing result} \label{Appendix}
Let $\mathcal{H}^m$ denote the $m$-dimensional Hausdorff measure on Euclidean space, with respect to the Euclidean metric. Let $\mathcal{M}(A)$ be the class of compactly supported, nonzero, finite Radon measures on a set $A \subseteq \mathbb{H}^n$. Let $N(E, \delta)$ be the $\delta$-neighbourhood of a set $E \subseteq \mathbb{H}^n$ with respect to the Euclidean metric. 
\begin{theorem}[A slicing result]\label{EuclideanSlice}
Let $A\subset\Hn$ be a Borel set such that $\dim_A>m+1$ and $0<\Ha^{\dim_E A}A<\infty$ for $1 \leq m \leq n$. Then for $\munm$-almost every $V \in G_h(n,m)$,
\[\Ha^m(\{v \in\V:\ \dim_E[A\cap(\Vp v)]=\dim_E A-m\})>0\]
\end{theorem}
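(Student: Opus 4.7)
The plan is to adapt the Fourier-analytic proof of the Marstrand--Mattila slicing theorem (cf.~Mattila, \emph{Geometry of Sets and Measures}, Chapter~10) to the restricted family of $(2n-m+1)$-dimensional affine planes $\{\Vp+v:V\in\Gh,\,v\in\V\}\subset\R^{2n+1}$, parametrized by $\munm$ on $\Gh$ and $\Ha^m$ on $\V$. By the coset computation in Section~\ref{rightcoset} one has $\Vp+v=\Vp v$ as subsets of $\R^{2n+1}$, and these planes are precisely the Euclidean fibers of the horizontal projection $\PV\colon(z,t)\mapsto\pi_V(z)$, so the theorem is really a Euclidean slicing statement for a restricted family of parallel hyperplanes.

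Fix $s\in(m+1,\dim_E A)$ and, via Frostman's lemma, let $\mu$ be a compactly supported Borel probability measure on $A$ with $\mu(B_E(x,r))\le Cr^s$. The pushforward of $\mu$ onto $\R^{2n}$ (obtained by forgetting $t$) still satisfies a Frostman condition of dimension $\ge s-1>m$, so the restricted-Grassmannian Euclidean projection theorem of \cite{Hproj2} supplies, for $\munm$-a.e.~$V\in\Gh$, an $L^2$ density $f_V$ of $P_{\V\#}\mu$ with respect to $\Ha^m|_\V$. I would then disintegrate $\mu=\int_\V\mu_{V,v}\,f_V(v)\,d\Ha^m(v)$, with $\mu_{V,v}$ a probability measure on $A\cap \Vp v$, and aim to prove that the averaged slice energy
\begin{equation*}
\mathcal{E}(s):=\int_{\Gh}\int_\V f_V(v)^2\,I_{s-m}(\mu_{V,v})\,d\Ha^m(v)\,d\munm(V)
\end{equation*}
is finite; this is the standard mechanism for concluding that $\Ha^m$-almost every slice has the expected dimension.

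The key step is a Fourier-side reduction. The Riesz-energy formula on $\Vp$, together with the Plancherel identity
\begin{equation*}
\int_\V f_V(v)^2|\widehat{\mu_{V,v}}(\xi)|^2\,d\Ha^m(v)=\int_\V|\hat\mu(\xi+\eta)|^2\,d\Ha^m(\eta),\qquad\xi\in\Vp,
\end{equation*}
(applied to the disintegration, using the orthogonality of $\V$ and $\Vp$ in $\R^{2n+1}$), and Fubini, reduce $\mathcal{E}(s)$ to
\begin{equation*}
\mathcal{E}(s)=c\int_{\R^{2n+1}}|\hat\mu(\zeta)|^2\left(\int_\Gh|\pi_{\Vp}(\zeta)|^{s-(2n+1)}\,d\munm(V)\right)d\zeta.
\end{equation*}
Writing $\zeta=(z,\tau)\in\R^{2n}\times\R$, the inner kernel equals $(|\pi_{V^\perp}(z)|^2+\tau^2)^{(s-(2n+1))/2}$, dominated by both $|\pi_{V^\perp}(z)|^{s-(2n+1)}$ and $|\tau|^{s-(2n+1)}$ (since $s<2n+1$). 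The restricted-Grassmannian inequality
\begin{equation*}
\int_\Gh|\pi_{V^\perp}(z)|^{s-(2n+1)}\,d\munm(V)\lesssim_s|z|^{s-(2n+1)}
\end{equation*}
from \cite[pp.~584-585]{Hproj2} is valid precisely when $s-(2n+1)>-(2n-m)$, i.e.~$s>m+1$, which pins down the theorem's hypothesis. Taking the minimum of the two kernel bounds then yields $\lesssim|\zeta|^{s-(2n+1)}$, whence $\mathcal{E}(s)\lesssim I_s(\mu)<\infty$.

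Finiteness of $\mathcal{E}(s)$ implies that for $\munm$-a.e.~$V\in\Gh$ and $\Ha^m$-a.e.~$v\in\{f_V>0\}$ one has $I_{s-m}(\mu_{V,v})<\infty$, so $\dim_E(A\cap \Vp v)\ge s-m$ on this set; since $f_V$ is a nontrivial $L^2$ function on $\V$, $\{f_V>0\}$ has positive $\Ha^m$-measure. Taking a countable sequence $s_k\nearrow\dim_E A$ and intersecting the resulting $\munm$-full-measure subsets of $\Gh$ finishes the argument. The main obstacle is the restricted-Grassmannian Fourier kernel estimate of the third paragraph: it is precisely where the hypothesis $\dim_E A>m+1$ enters (unrestricted Euclidean slicing requires only $>m$), and it is fortunate that the required inequality is the same one already used in the paper's proof of Theorem~\ref{Hnbounda}.
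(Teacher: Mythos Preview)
Your proposal is correct and follows a genuinely different route from the paper.  The paper works entirely in physical space: after invoking Eilenberg's inequality for the upper bound, it builds the sliced measures $\mu_{\Vp v}$ as in Mattila's Chapter~10, proves $P_{\V\#}\mu\ll\Ha^m$ by hand via the transversality estimate $\munm\{V:|\pi_V(w)|<\delta\}\lesssim(\delta/|w|)^m$ (Lemma~2.4 of \cite{Hproj2}), and then bounds the averaged slice energy by the mixed kernel integral $\iint|p-q|^{m-s}|\pi(p)-\pi(q)|^{-m}\,d\mu\,d\mu$, which is controlled using a dyadic decomposition of the cylinder $\{|\zeta|<|\tau|\}$ and the Frostman condition.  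You instead pass to the Fourier side, using the Plancherel identity for the disintegration along $\V$ to rewrite $\mathcal{E}(s)$ as $\int|\hat\mu(\zeta)|^2\int_{\Gh}|\pi_{\Vp}(\zeta)|^{s-(2n+1)}\,d\munm\,d\zeta$ and then bounding the inner kernel by $|\zeta|^{s-(2n+1)}$ via the same restricted-Grassmannian inequality from \cite{Hproj2}.  Both approaches land on the threshold $s>m+1$ for exactly the same reason (the integrability of $|\pi_{V^\perp}(z)|^{-(2n+1-s)}$ over $\Gh$), and both need Eilenberg for the matching upper bound.  Your argument is shorter and more conceptual; the paper's is more self-contained and avoids the mild technicalities of justifying Plancherel for the slice disintegration.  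One small point: with only $\mu(B_E(x,r))\le Cr^s$ you do not get $I_s(\mu)<\infty$; take the Frostman exponent strictly between $s$ and $\dim_E A$, as the paper does with its two parameters $s<\sigma$.
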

Note the great deal of similarity between this theorem and Theorem~1.5 in \cite{Hproj2}. In fact the proof follows the same techniques and only differ in that here we consider Hausdorff dimension with respect to the Euclidean metric.

\begin{proof}
Eilenberg's inequality (Theorem~13.3.1 in \cite{GeoIneq}) tells us that for every $\V$,
\[\dim_E[A\cap(\Vp v)]\leq\dim_E A-m,\  \text{for $\Ha^m$- almost every }v\in\V.\]
Therefore, we only need to prove the dimension lower bound. For this, we will make use of sliced measures in the sense of \cite{Mattila2} (Section 10.1). By Eq. (10.6) in \cite{Mattila2}, we know that for $\mu\in\mathcal{M}(A)$ there exists a family of measures $\mu_{\Vp v}$, defined for $\Ha^m$-a.e. $v\in\V$, each supported on $\Vp v$, such that for any non-negative continuous function $\varphi$ compactly supported on $\mathbb{H}^n$ and any Borel set $B\subset \V$, the map $v \mapsto \int_{\Vp v}\varphi(u) \, d\mu_{\Vp v}(u)$ is Borel measurable and satisfies
\begin{equation}\label{slicemeasure}
\int_B\int_{\Vp v}\varphi(u) \, d\mu_{\Vp v}(u) \, d\Ha^m(v) \leq \int_{P_{\V}^{-1}(B)}\varphi(u) \,  d\mu(u),
\end{equation}
with equality if $P_{\V\#}\mu\ll\Ha^m$. In particular if $P_{\V\#}\mu\ll\Ha^m$, $\int_{\V}\mu_{\Vp v}(A\cap \Vp v) \, d\Ha^m(v)=\mu(A)>0$, so that at least a $\Ha^m$-positive measure set of the measures $\mu_{\Vp v}$ are in $\mathcal{M}(A\cap \Vp v)$. Hence, we want to pick a measure $\mu\in\mathcal{M}(A)$ such that $P_{\V\#}\mu\ll\Ha^m$ for $\munm$- almost every $\V$. As the next claim will show, this is possible precisely when $\dim_E A>m+1$.

 \textbf{Claim}: Let $\sigma>m+1$ and assume $\mu\in\mathcal{M}(\Hn)$ satisfies $\mu(B_E(p,r))\leq r^\sigma$ for all $p\in\Hn$ and $r>0$. Then $P_{\V\#}\mu\ll\Ha^m\lfloor_{\V}$ for $\munm$- almost every $\V$.
 
 To see this, denote by $\pi:\Hn\to\Rnn$ the bundle map $\pi(z,t)=z$, and note that $P_{\V\#}\mu(B_{\V}(v,r))=\mu(P_{\V}^{-1}(B_{\V}(v,r)))=\mu(\{p\in\Hn:|P_{\V}(p)-v|<  r\})$. Now, Theorem~2.12 in \cite{Mattila1} tells us that $P_{\V\#}\mu\ll\Ha^m$ if and only if 
\[ \liminf_{\delta\to0}\delta^{-m}P_{\V\#}\mu(B_{\V}(v,\delta))<\infty \quad \text{for } P_{\V\#}\mu\text{-almost every } v\in\V. \]
Using Fatou's lemma, and Fubini (see e.g.~Theorem~1.14 in \cite{Mattila1}), we compute:
 \begin{align*} &\int_{\Gh}\int_{\V} \liminf_{\delta\to0}\delta^{-m}P_{\V\#}\mu(B_{\V}(v,\delta))\, dP_{\V\#}\mu(v) \, d\munm(V)\\
 &\quad \leq  \liminf_{\delta\to0}\delta^{-m}\int_{\Gh}\int_{\V}P_{\V\#}\mu(B_{\V}(v,\delta))\, dP_{\V\#}\mu(v) \, d\munm(V)\\
 &\quad =\liminf_{\delta\to0}\delta^{-m}\int_{\Hn}\int_{\Hn}\munm\left\{V\in\Gh: |P_{\V}(p)-P_{\V}(q)|<\delta \right\}\, d\mu(q)\, d\mu(p)\\
 &\quad \leq\liminf_{\delta\to0}\delta^{-m}\int_{\Hn}\int_{\Hn}\munm\left\{V\in\Gh: |\pi_{\mathbb{V}}(\pi(p))-\pi_{\mathbb{V}}(\pi(q)) |<\delta \right\} \, d\mu(q) \, d\mu(p)\\
 &\quad \lesssim\int_{\Hn}\int_{\Hn}|\pi(q)-\pi(p)|^{-m} \, d\mu(q) \, d\mu(p), \end{align*}
where the last step follows from Lemma~2.4 in \cite{Hproj2}. We now focus our attention on showing finiteness of this last integral. Since $\supp(\mu)$ is compact, we can fix $R>0$ such that $\supp(\mu)\subset B_E(0,R)$. For $z\in\Rnn$ the set $\{q\in\Hn: |\pi(q)-z|\leq r\}$ is a cylinder with radius $r$, so $\{q\in\Hn: |\pi(q)-z|\leq r\}\cap \supp(\mu)\subset B_E^{2n}(z,r)\times[-R,R]$. This cylinder can be covered by at most $\left\lceil Cr^{-1} \right\rceil$ balls of radius $r$, where $C=C(n,R)$ is independent of $z$ and $r$. It follows that $\mu(\{q\in\Hn: |\pi(q)-z|\leq r\})\lesssim r^{\sigma-1}$. Therefore,
\begin{align*}
\int_{\Hn}|\pi(q)-z|^{-m}d\mu(q) &=\int_0^{\infty}\mu\left(\left\{q\in\Hn: |\pi(q)-z|\leq r^{-1/m}\right\}\right)\, dr\\
&=\int_0^{1}\mu\left(\left\{q\in\Hn: |\pi(q)-z|\leq r^{-1/m}\right\}\right)\, dr\\
&\quad +\int_1^{\infty}\mu\left(\left\{q\in\Hn: |\pi(q)-z|\leq r^{-1/m}\right\}\right)\, dr\\
&\lesssim \mu(\Hn)+\int_1^{\infty}r^{\frac{1-\sigma}{m}}\, dr.
\end{align*}
Since $\sigma-1>m$ it follows that $\int_1^{\infty}r^{\frac{1-\sigma}{m}}\, dr<\infty$. This tells us that 
\[\int_{\Hn}\int_{\Hn}|\pi(q)-\pi(p)|^{-m} \, d\mu(q) \, d\mu(p)\lesssim \mu(\Hn)\left(\mu(\Hn)+\int_1^{\infty}r^{\frac{1-\sigma}{m}} \, dr\right)<\infty,\]
 which proves the claim.
   
By Frostman's lemma, if $\dim A=\alpha>m+1$ with $0<\Ha^{\alpha}(A)<\infty$, then we may choose $\mu\in\mathcal{M}(A)$ to be a suitable restriction of $\Ha^{\alpha}$ such that $\mu(B_E(p,r))\leq r^{\alpha}$ for all $p\in\Hn$ and $r>0$. From the claim, we know $P_{\V\#}\mu\ll\Ha^m$ for $\munm$-almost every $V \in G_h(n,m)$. As noted before, it follows that for $\munm$-almost every $V \in G_h(n,m)$, the measure $\mu_{\Vp v}$ is in $\mathcal{M}(A\cap \Vp v )$ for all $v$ in a set of positive $\Ha^m$ measure.

We now aim to show that if $m+1<s<\alpha$, then for $\munm$-almost every $V \in G_h(n,m)$, 
\begin{equation}\label{sigmaenergy}
I_{s-m}(\mu_{\Vp v},d_E)<\infty\text{ for }\Ha^m\text{-a.e.~$v\in\V$.} 
\end{equation}
By Fatou's lemma, Tonelli's theorem, and by applying \eqref{slicemeasure} with $B=B(v, \delta)$ and letting $\delta \to 0$, we can compute:
\begin{align*}
&\int\int_{\V}I_{s-m}(\mu_{\Vp v},d_E)\, d\Ha^m(v)\, d\munm(V)\\
&\quad \leq \liminf_{\delta\to0} \delta^{-m}\int\int_{\V}\int_{\Vp v}\int_{N(\Vp v,\delta)}|p-q|^{m-s} \, d\mu(p) \, d\mu_{\Vp v}(q) \, d\Ha^m(v)\, d\munm(V)\\
&\quad \leq \liminf_{\delta\to0} \delta^{-m}\int\int_{\V}\int_{N(\Vp v,\delta)}\int_{\Vp v}|p-q|^{m-s}\, d\mu_{\Vp v}(q)\, d\mu(p)\, d\Ha^m(v) \, d\munm(V)\\
&\quad \leq \liminf_{\delta\to0} \delta^{-m} \\
&\qquad \times \int \int_{\Hn}\int_{\{v\in\V:d_E(p,\Vp v)\leq\delta\}}\int_{\Vp v}|p-q|^{m-s}\, d\mu_{\Vp v}(q) \, d\Ha^m(v)\, d\mu(p) \, d\munm(V).
\end{align*}
 Now we apply \eqref{slicemeasure} to the inner double integral, use Tonelli's theorem, and apply Lemma~2.4 from \cite{Hproj2} to get
 \begin{align*}
 &\int\int_{\V}I_{s-m}(\mu_{\Vp v},d_E)\, d\Ha^m(v) \, d\munm(V)\\
 &\quad \leq \liminf_{\delta\to0}\delta^{-m} \int \int_{\Hn}\int_{\{q:|P_{\V}(p-q)|\leq \delta\}}|p-q|^{m-s} \, d\mu(q) \, d\mu(p) \, d\munm(V)\\
 &\quad \lesssim \int_{\Hn}\int_{\Hn}|p-q|^{m-s}|\pi(p)-\pi(q)|^{-m} \, d\mu(q) \, d\mu(p).
 \end{align*}
 This last integral is not quite $I_s(\mu,d_E)$, and in fact the singularity in the kernel $|q|^{m-s}|\pi(q)|^{-m}$ is stronger than the one in the kernel $|q|^{-s}$. Nevertheless, we will show this integral is finite following the same approach as in the proof of Theorem~1.5 in \cite{Hproj2}, by showing that the inner integral is finite for all $p$ and using the fact that $\mu(\Hn)<\infty$.
 
 If we denote by $L_{-p}$ the Euclidean left translation by $-p$, the inner integral can be written as 
 \[\int_{\Hn}|p-q|^{m-s}|\pi(p)-\pi(q)|^{-m} \, d\mu(q)=\int_{\Hn}|q|^{m-s}|\pi(q)|^{-m}\, dL_{-p\#}\mu(q).\]
 Moreover, it is clear that $L_{-p\#}\mu(\Hn)=\mu(\Hn)$ and $L_{-p\#}\mu(B_E(q,r))\leq r^s$ for every $q \in\Hn$ and $r>0$. Furthermore, since $\mu$ is compactly supported, by scaling we may assume the support of $L_{-p\#}\mu$ is contained in $B_E(0,1)$. Therefore it is enough to show that \[\int_{\Hn}|q|^{m-s}|\pi(q)|^{-m}\, d\mu(q) \lesssim 1,\]
 whenever $\mu\in\mathcal{M}(B_E(0,1))$ and satisfies $\mu(B_E(p,r))\leq r^{\alpha}$ for all $p\in\Hn$ and $r>0$. Writing $q=(\zeta,\tau)$ we have 
 \begin{align*}
 \int_{\Hn} |q|^{m-s}|\pi(q)|^{-m} \, d\mu(q) &=\int \left\lvert|\zeta|^2+\tau^2\right\rvert^{\frac{m-s}{2}}|\zeta|^{-m} \, d\mu\\
 &\sim \int_{\{|\zeta|\geq|\tau|\}}|q|^{-s}\, d\mu +\int_{\{|\zeta|<|\tau|\}}|\tau|^{m-s}|\zeta|^{-m}\, d\mu\\
 &=:\mathcal{I}_1+\mathcal{I}_2. \end{align*}
 We look at these two quantities separately, the first one being the easier to bound. Indeed, using a change of variables and recalling our choice of $s$,
 \begin{align*}
 \mathcal{I}_1\leq \int_{\Hn}|q|^{-s} \, d\mu&=\int_0^{\infty}\mu\left(\left\{q:|q|\leq r^{-1/s}\right\} \right)\, dr\\
 &=\int_0^{\infty}\mu\left(B_E\left(0,r^{-1/s} \right)\right) \, dr\\
 &=s\int_0^{\infty}\mu\left(B_E(0,u) \right)u^{-s-1} \, du\\
 &\leq s\int_0^{1}u^{\alpha-s-1} \, du+ s\mu(\Hn)\int_{1}^{\infty}u^{-s-1} \, du<\infty.
 \end{align*}
 To bound the second integral we first split the domain of integration: 
\begin{multline*} \left\{(\zeta,\tau)\in B_E(0,1): |\zeta|<|\tau| \right\} \\
=\bigcup_{i=0}^{\infty} \left\{(\zeta,\tau)\in B_E(0,1):2^{-i-1}|\tau|\leq|\zeta| < 2^{-i}|\tau|\right\}=:\bigcup_{i=0}^{\infty} A_i. \end{multline*} 
 Note that for $(\zeta,\tau)\in A_i$, $|\zeta|^{-1}\sim 2^{i}|\tau|^{-1}$. Therefore,
 \begin{align*}
 \mathcal{I}_2&\sim\sum_{i=0}^{\infty}\int_{A_i}(2^{-i}|\tau|)^{-m}|\tau|^{m-s} \, d\mu\\
 &=\sum_{i=0}^{\infty}\int_{A_i}2^{im}|\tau|^{-s} \, d\mu \\
 &\sim\sum_{i,j =0}^{\infty} \int_{A_{i,j}}2^{im}(2^{-j})^{-s} \, d\mu=\sum_{i,j}2^{im+js}\mu(A_{i,j}),
 \end{align*}
 where \[A_{i,j}=\left\{(\zeta,\tau)\in B_E(0,1):2^{-i-j-2}\leq|\zeta|\leq2^{-i-j},\ 2^{-j-1}\leq|\tau| < 2^{-j}\right\}.\]
 To estimate $\mu(A_{i,j})$ we see that  $A_{i,j}\subset B_E^{2n}(0,2^{-i-j})\times[-2^{-j}, 2^{-j}]$. Hence, there exists a constant $C>0$ independent of $i$ and $j$ such that $A_{i,j}$ can be covered by at most $C\frac{2^{-j}}{2^{-i-j}}=C2^i$ balls of radius $2^{-i-j}$. The Frostman condition on $\mu$ now tells us that $\mu(A_{i,j})\lesssim 2^i 2^{-\alpha(i+j})$. Going back to the sum we are trying to bound, we get
 \[\sum_{i,j=0}^{\infty} 2^{im+js}\mu(A_{i,j})\lesssim\sum_{i,j=0}^{\infty}2^{i(m+1-\alpha)+j(s-\alpha)},\]
 which is finite since $m+1-\alpha$ and $s-\alpha$ are both negative.

 Now to complete the proof of the proposition, for $V\in\Gh$ write
 \[E_{\V}:=\{v\in\V:\mu_{\Vp v}(\Hn)>0\},\]
 so that for $v\in E_{\V}$, $\mu_{\Vp v}\in\mathcal{M}(A\cap(\Vp v))$. Since, by the claim, we know  $P_{\V\#}\mu\ll\Ha^m$, equality in \eqref{slicemeasure} with $B=\V$ tells us that $\Ha^m(E_{\V})>0$. Furthermore, by the previous computation it follows that if $m+1<s<\alpha$ then for $\munm$-almost every $V\in\Gh$, the energy $I_{s-m}(\mu_{\Vp v},d_E)$ is finite for $\Ha^m$-almost every $v\in E_{\V}$. This tell us that $\dim_E[A\cap(\Vp v)]\geq s-m$. Since $E_{\V}$ is independent of $s$ and $\alpha$, the theorem follows by letting $s\to\alpha$.
 \end{proof}

\section{Construction of a product set with prescribed Euclidean and Heisenberg dimension} \label{appendix2}
In this section we outline the construction of the set in \eqref{productconditions} required in part of the proof of Theorem~\ref{Hnbounda}, specifically for the sharpness of the lower bound in \eqref{koranyi2}. Given $\alpha \in [0, 2n-1]$, we require a compact set of the form $A = \mathcal{C}_{\alpha} \times I$ such that 
\begin{equation} \label{dimreq} \dim_E A = \alpha+1, \quad \dim_{\mathbb{H}^n} A = \alpha+2, \end{equation}
where $I \subseteq \mathbb{R}$ is a compact interval. There are two cases; either 
\begin{equation} \label{evencase} \alpha = 2j + \beta, \quad j \in \{0, 1, \dotsc, n-1\}, \quad 0 \leq \beta \leq 1, \end{equation}
or 
\begin{equation} \label{oddcase} \alpha = 2j+1 + \beta, \quad j \in \{0, 1, \dotsc, n-2 \}, \quad 0 < \beta < 1. \end{equation}
In the first case, let $\mathcal{C}_{\beta}$ be a Cantor set in $\mathbb{R}$ with finite, nonzero $\beta$-dimensional Euclidean Hausdorff measure, and let 
\[ A'  = \left(\mathcal{C}_{\beta} \times \{0 \} \right) \times \mathbb{C}^j \times \{0\}^{2(n-1)-2j} \times \mathbb{R}, \]
which clearly satisfies $\dim_E A' = \alpha+1$. Using $\mathcal{H}^a_b$ to denote the $a$-dimensional Euclidean Hausdorff measure living on $\mathbb{R}^b$, the measure
\[ \mu := \left(\mathcal{H}^{\beta}_2  \times \mathcal{H}^{2j}_{2(n-1)} \times \mathcal{H}^1_1 \right)|_{A'}, \]
is nonzero and supported on $A'$, and satisfies 
\[ \mu\left(B_{\mathbb{H}^n}((z,t), r ) \right) \lesssim r^{\alpha+2}, \]
for all $(z,t) \in A'$ and $0< r < 1$. This can be proved similarly to the proof that the Hausdorff dimension of $\mathbb{H}^n$ is $2n+2$; using invariance by left translation and that the coordinates from $\mathcal{C}_{\beta} \times \{0 \}$ vanish in the symplectic form. Hence $\dim_{\mathbb{H}^n}(A') \geq \alpha+2$, and therefore $\dim_{\mathbb{H}^n}(A') = \alpha+2$ by dimension comparison. By using homogeneous dilations, this implies that the set
\[ A : = \left(\mathcal{C}_{\beta} \times \{0 \} \right) \times [0,1]^{2j} \times \{0\}^{2(n-1)-2j} \times [0, 1]. \]
satisfies \eqref{dimreq}. 
This finishes the construction in the case of \eqref{evencase}. The odd case in \eqref{oddcase} is similar, except that $A$ is defined by
\[  A= \left(\mathcal{C}_{\frac{1+\beta}{2}} \times \{0 \} \right)^2 \times [0,1]^{2j} \times \{0\}^{2(n-2)-2j} \times [0,1]. \]


\bibliographystyle{plain}

\bibliography{BiblioProj}

\end{document}